\newtheorem{theorem}{Theorem}
\newtheorem{lemma}[theorem]{Lemma}
\newtheorem{corollary}[theorem]{Corollary}
\newtheorem{proposition}[theorem]{Proposition}
\theoremstyle{definition}
\newtheorem{example}[theorem]{Example}
\newtheorem{question}{Question}
\DeclareMathOperator{\conv}{conv}
\DeclareMathOperator{\Graph}{Graph}
\DeclareMathOperator{\Range}{Range}
\renewcommand{\Re}{\operatorname{Re}}
\renewcommand{\Im}{\operatorname{Im}}
\theoremstyle{remark}
\newtheorem{remark}[theorem]{Remark}
\numberwithin{equation}{section}
\newcommand{\N}{\mathbb N}
\newcommand{\K}{\mathbb K}
\newcommand{\R}{\mathbb R}
\newcommand{\C}{\mathbb C}
\newcommand{\D}{\mathbb D}
\title{Hypercyclic algebras for convolution and composition operators}
\author[J.\ B\`{e}s, J.\ A.\ Conejero and D.\ Papathanasiou]{J.\ B\`{e}s, J.\ A.\ Conejero, and D.\ Papathanasiou}
\address{J. B\`{e}s, Department of Mathematics and Statistics,
Bowling Green State University,
Bowling Green, Ohio 43403,
USA}
\email{jbes@bgsu.edu}
\address{J. A. Conejero, Instituto Universitario de Matem\'atica Pura y Aplicada,
Universitat Polit\`ecnica de Val\`encia, E-46022 Valencia }
\email{aconejero@mat.upv.es}
\address{D. Papathanasiou, Department of Mathematics and Statistics,
Bowling Green State University,
Bowling Green, Ohio 43403,
USA}
\email{dpapath@bgsu.edu}
\thanks{This work is supported in part by MEC, Project
MTM 2016-7963-P. 
We also thank \'Angeles Prieto and Jes\'us A.\ Jaramillo for comments and suggestions}
\date{February 2, 2018}
\subjclass[2010]{Primary 47A16, 46E10}
\keywords{Hypercyclic algebras; convolution operators; composition operators, hypercyclic subspaces; MacLane operator, Birkhoff operator}
\begin{document}
\begin{abstract}
We provide an alternative proof to those by Shkarin and by Bayart and Matheron that the operator $D$ of complex differentiation supports a hypercyclic algebra on the space of entire functions. In particular we obtain hypercyclic algebras for many convolution operators not induced by polynomials, such as $\mbox{cos}(D)$, $De^D$, or $e^D-aI$, where $0<a\le 1$.
In contrast, weighted composition operators on function algebras of analytic functions on a plane domain fail to support supercyclic algebras.
\end{abstract}

\dedicatory{Dedicated to Professor Jes\'us A. Jaramillo on the occasion
	of his $60$th birthday.}

\maketitle
{\large 
\section{Introduction}
A special task in linear dynamics is to understand the algebraic and topological properties of the set
\[
HC(T)=\{ f\in X: \ \ \{ f, Tf, T^2f,\dots \} \mbox{ is dense in $X$} \}
\]
of hypercyclic vectors for a given operator $T$ on a topological vector space $X$.    It is well known that in general $HC(T)$ is always connected and is either empty or contains a dense infinite-dimensional linear subspace (but the origin), see \cite{wengenroth2003hypercyclic}. Moreover, when $HC(T)$ is non-empty it sometimes contains (but zero) a closed and infinite dimensional linear subspace, 
but not always
\cite{bernal-gonzalez_montes-rodriguez1995non-finite-dimensional,gonzalez_leon-saavedra_montes-rodriguez2000semi-fredholm}; see also \cite[Ch.\ 8]{bayart_matheron2009dynamics} and \cite[Ch.\ 10]{grosse-erdmann_peris-manguillot2011linear}.

When $X$ is a topological algebra it is natural to ask whether $HC(T)$ can contain, or must always contain, a subalgebra (but zero) whenever it is non-empty; any such subalgebra is said to be a {\em hypercyclic algebra } for the operator $T$.  Both questions have been answered by considering convolution operators on the space $X=H(\mathbb{C})$ of entire functions on the complex plane $\mathbb{C}$, endowed with the compact-open topology; that convolution operators (other than scalar multiples of the identity) are hypercyclic was established by Godefroy and Shapiro \cite{godefroy_shapiro1991operators}, see also \cite{birkhoff1929demonstration,maclane1952sequences,aron_markose2004on}, together with the fact that convolution operators on $H(\C)$ are precisely those of the form
\[
f\overset{\Phi(D)}{\mapsto} \sum_{n=0}^\infty a_n D^nf   \ \ \ (f\in H(\C))
\]  
where $\Phi(z)=\sum_{n=0}^\infty a_n z^n \in H(\C)$ is of (growth order one and finite) exponential type (i.e., $|a_n|\le M\ \frac{R^n}{n!} (n=0, 1,\dots )$, for some $M, R>0$) and where $D$ is the operator of complex differentiation.
Aron et al \cite{aron_conejero_peris_seoane-sepulveda2007powers,aron_conejero_peris_seoane-sepulveda2007sums} showed that no translation operator $\tau_a$ on $H(\C)$
\[
\tau_a (f)(z)=f(z+a) \ \ f\in H(\mathbb{C}), z\in\mathbb{C}
\]
can support a hypercyclic algebra, in a strong way:

\begin{theorem} \label{T:-1} {\rm {\bf  (Aron, Conejero, Peris, Seoane)}}
For each integer $p>1$ and each $f\in HC(\tau_a)$,  the non-constant elements  of the orbit of $f^p$ under $\tau_a$ are those entire functions for which the multiplicities of their zeros are integer multiples of $p$.   
\end{theorem}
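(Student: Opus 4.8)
The plan is to reduce everything to the single algebraic observation that $\tau_a$ is a composition operator, $\tau_a f = f\circ\sigma$ with $\sigma(z)=z+a$, and hence an endomorphism of the algebra $H(\C)$: for all $f,g\in H(\C)$ one has $\tau_a(fg)=(\tau_a f)(\tau_a g)$. Iterating, I would record the identity
\[
\tau_a^n(f^p)=(\tau_a^n f)^p \qquad (n\ge 0),
\]
so that every member of the orbit of $f^p$ is literally the $p$-th power of the corresponding member $\tau_a^n f$ of the orbit of $f$. Since $f\in HC(\tau_a)$ it is non-constant, and no $\tau_a^n f$ can be constant (a constant value would force $f$ itself to be constant); thus each orbit element $(\tau_a^n f)^p$ is a non-constant $p$-th power. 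Note that the hypercyclicity of $f$ plays no role in this structural step -- the orbit of any $p$-th power consists of $p$-th powers -- its only function is to guarantee that the entire orbit is non-constant and, ultimately, to make the conclusion relevant to the algebra question.

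Having identified the orbit elements as $p$-th powers, it remains to match this with the stated zero-multiplicity condition, i.e.\ to prove the complex-analytic characterization: a non-constant $G\in H(\C)$ equals $H^p$ for some $H\in H(\C)$ \emph{if and only if} the multiplicity of every zero of $G$ is an integer multiple of $p$. The forward implication is immediate, since a zero of $H$ of order $m$ becomes a zero of $H^p$ of order $mp$; applied to $G=(\tau_a^n f)^p$ this already yields that each non-constant orbit element has all zero-multiplicities divisible by $p$. For the converse I would build the root $H$ from the logarithmic derivative: $G'/G$ is meromorphic with a simple pole of residue equal to the (multiple-of-$p$) multiplicity at each zero of $G$, so by the residue theorem $\frac{1}{2\pi i}\int_\gamma \tfrac1p\frac{G'}{G}\,dz\in\Z$ for every closed curve $\gamma$. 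Because $\C$ is simply connected, $H:=\exp\!\big(\tfrac1p\int_{z_0}^{z}\frac{G'(\zeta)}{G(\zeta)}\,d\zeta\big)$ is then a well-defined entire function -- holomorphic even across the zeros of $G$, where locally $(z-z_k)^{m_k}$ with $m_k$ a multiple of $p$ contributes an honest $(z-z_k)^{m_k/p}$ factor -- and $H^p$ agrees with $G$ up to a constant that can be absorbed into $H$. Together with the orbit identity this gives exactly the assertion: the non-constant orbit elements are precisely $p$-th powers, equivalently, the entire functions all of whose zeros have multiplicities divisible by $p$.

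The argument presents no real obstacle beyond the $p$-th root lemma just described, which is standard; the genuinely decisive point is conceptual rather than technical, namely that multiplicativity of the composition operator rigidly preserves the ``$p$-th power'' structure along the whole orbit. I would close by drawing out the intended consequence that makes the theorem sharp: since every orbit element of $f^p$ is a $p$-th power, the orbit of $f^p$ cannot be dense. For instance, by Hurwitz's theorem a locally uniform limit of functions whose zeros all have order $\ge p$ cannot develop a simple zero, so no such sequence approximates $z\mapsto z$ on a disc about the origin. Hence $f^p\notin HC(\tau_a)$ for every $f$ and every integer $p>1$, and in particular $\tau_a$ supports no hypercyclic algebra.
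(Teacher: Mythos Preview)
The paper does not prove Theorem~\ref{T:-1}; it is quoted from Aron, Conejero, Peris and Seoane-Sep\'ulveda and cited to \cite{aron_conejero_peris_seoane-sepulveda2007powers,aron_conejero_peris_seoane-sepulveda2007sums} without argument. So there is no in-paper proof to compare against, and your sketch has to be judged on its own merits.

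What you wrote is correct as far as it goes. The identity $\tau_a^n(f^p)=(\tau_a^n f)^p$ is indeed the heart of the matter, and your $p$-th root lemma (a non-constant entire $G$ is a $p$-th power iff every zero has order divisible by $p$) is right; the logarithmic-derivative construction of $H$ is fine once one notes, as you do, that the local factor $(z-z_k)^{m_k/p}$ is honestly holomorphic. Your closing Hurwitz remark correctly extracts the corollary that $f^p$ is never hypercyclic, which is the point the present paper actually uses.

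There is, however, a gap relative to the statement as phrased. The assertion that the non-constant orbit elements \emph{are} ``those entire functions for which the multiplicities of their zeros are integer multiples of $p$'' is a two-sided identification, and in the Aron et al.\ result it refers to the \emph{closure} of the orbit (the literal orbit is only countable, so a set equality with all $p$-th powers would be absurd otherwise). You have established one containment: every orbit element, and by your Hurwitz argument every non-constant element of the orbit closure, is a $p$-th power. The reverse containment, that every non-constant $p$-th power $h^p$ lies in $\overline{\mathrm{Orb}(\tau_a,f^p)}$, is precisely where the hypercyclicity of $f$ is used in an essential way: since $\{\tau_a^{n}f:n\ge 0\}$ is dense in $H(\C)$, pick $n_k$ with $\tau_a^{n_k}f\to h$, and then continuity of $g\mapsto g^p$ gives $\tau_a^{n_k}(f^p)=(\tau_a^{n_k}f)^p\to h^p$. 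Your own remark that ``the hypercyclicity of $f$ plays no role'' in your structural step is the tell: for the full characterization it plays exactly this role, and without this missing half the theorem as stated is not yet proved.
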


 In sharp contrast with the translations operators,  they also  showed  that the collection of entire functions $f$ for which every power $f^n$ $(n=1,2,\dots )$ is hypercyclic for $D$ 
is residual in $H(\mathbb{C})$. 
Later Shkarin \cite[Thm.\ 4.1]{shkarin2010on} showed that $HC(D)$ contained both a hypercyclic subspace and a hypercyclic algebra, and with a different approach Bayart and Matheron
\cite[Thm.\ 8.26]{bayart_matheron2009dynamics} also showed that the set of $f\in H(\mathbb{C})$ that  generate an algebra consisting entirely (but the origin) of hypercyclic vectors for $D$ is residual in $H(\mathbb{C})$, and by using the latter approach 
we now know the following:

\begin{theorem} \label{T:0}  {\rm {\bf (Shkarin \cite{shkarin2010on}, Bayart and Matheron \cite{bayart_matheron2009dynamics}, B\`es, Conejero, Papathanasiou \cite{bes_conejero_papathanasiou2016convolution})}}
 Let $P$ be a non-constant polynomial with $P(0)=0$. 
Then the set of functions $f\in H(\C)$ that generate a hypercyclic algebra for $P(D)$ is residual in $H(\C)$. 
\end{theorem}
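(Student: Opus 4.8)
The plan is to run a Baire category argument in the separable Fr\'echet algebra $H(\C)$, exactly in the spirit of the Bayart--Matheron approach alluded to above. Write $T=P(D)$ and let $\{V_s\}_s$ be a countable base for $H(\C)$ and $\{W_t\}_t$ a countable base of neighborhoods of $0$. For integers $m\ge 1$ set
\[
\Omega(m,s,t)=\{\, f\in H(\C):\ \exists\, n\in\N \text{ with } T^n(f^m)\in V_s \text{ and } T^n(f^j)\in W_t \text{ for } 1\le j<m \,\}.
\]
Each $\Omega(m,s,t)$ is open, since $f\mapsto T^n(f^j)$ is continuous on the topological algebra $H(\C)$ and the defining conditions are open, so $\Omega(m,s,t)$ is a union over $n$ of open sets. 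First I would check that the (countable) intersection $\bigcap_{m,s,t}\Omega(m,s,t)$, a $G_\delta$ set, is contained in the set of generators of a hypercyclic algebra: given $f$ in the intersection and any nonzero $Q(x)=\sum_{j=1}^{k}c_jx^j$ with $c_k\ne 0$, any target $g$, and any $\varepsilon>0$, one chooses $V_s$ to be a small ball about $g/c_k$ and $W_t$ small enough that the sum $\sum_{j<k}c_jT^n(f^j)$ is negligible; the resulting $n$ makes $T^n(Q(f))=c_kT^n(f^k)+\sum_{j<k}c_jT^n(f^j)$ as close to $g$ as desired, so the top power dominates and $Q(f)\in HC(T)$. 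Consequently it suffices to prove that each $\Omega(m,s,t)$ is dense, which is the transitivity statement: for all nonempty open $U,V\subseteq H(\C)$, every $0$-neighborhood $W$, and every $m\ge 1$, there are $f\in U$ and $n\in\N$ with $T^n(f^m)\in V$ and $T^n(f^j)\in W$ for $1\le j<m$.

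To establish the transitivity I would exploit two features of $T=P(D)$. Since polynomials are dense in $H(\C)$, I may assume $U$ contains a polynomial $u$ and seek $f=u+w$ with $w$ small. The hypothesis $P(0)=0$ enters decisively here: writing $P(z)=a_rz^r+\cdots$ with $r\ge 1$, the operator $P(D)$ lowers the degree of any polynomial by $r$, so $T^n(u^j)=0$ for every fixed $j$ once $n$ is large; thus after applying a large power of $T$, only the terms of $(u+w)^m$ that involve at least one factor of $w$ survive. For $w$ I would take a short exponential sum $w=\sum_{k=1}^{p}c_k e^{\lambda_k z}$ and use $T^ne^{\nu z}=P(\nu)^n e^{\nu z}$. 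Approximate the target $v$ on the relevant compact set by an exponential polynomial $\sum_k d_k e^{\nu_k z}$ whose frequencies satisfy $|P(\nu_k)|>1$; this is possible because $\{\nu:|P(\nu)|>1\}$ is a nonempty open set, hence has accumulation points, so the corresponding exponentials span a dense subspace of $H(\C)$. Setting $\lambda_k=\nu_k/m$, the diagonal part of $w^m$ contributes $\sum_k c_k^m P(\nu_k)^n e^{\nu_k z}$, and solving $c_k^m P(\nu_k)^n=d_k$ gives $c_k=(d_k P(\nu_k)^{-n})^{1/m}$, which is as small as we wish once $n$ is large, since $|P(\nu_k)|>1$. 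Hence $f=u+w\in U$ and the principal term of $T^n(f^m)$ approximates $v$.

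The main obstacle is the control of the remaining terms: the off-diagonal contributions to $w^m$ and the genuinely lower powers $f^j$ with $1\le j<m$. A surviving term carries a frequency $\nu=\tfrac1m\sum_k\alpha_k\nu_k$ with $|\alpha|\le m$ and $\alpha\ne me_k$, a coefficient of size $\prod_k|c_k|^{\alpha_k}\sim\prod_k|P(\nu_k)|^{-\alpha_k n/m}$, and possibly a polynomial factor of bounded degree; its size is therefore governed, up to a polynomial-in-$n$ factor, by $\big(|P(\nu)|\big/\prod_k|P(\nu_k)|^{\alpha_k/m}\big)^{n}$. To force all these into $W$ as $n\to\infty$ one must choose the frequencies $\nu_k$ so that
\[
\Big|P\big(\tfrac1m\textstyle\sum_k\alpha_k\nu_k\big)\Big|<\prod_k|P(\nu_k)|^{\alpha_k/m}
\]
for every admissible non-diagonal multi-index $\alpha$. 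Securing this geometric-mean-type domination simultaneously for all the finitely many relevant $\alpha$ is the delicate analytic core of the argument; I expect to handle it by selecting the $\nu_k$ to be suitably spread out (for instance with widely separated values of $|P(\nu_k)|$), or by an induction on $m$ that introduces one new frequency at a time while keeping the previously damped terms damped. It is precisely the availability of a damping regime --- rooted in $P(0)=0$, which both annihilates the polynomial part and keeps the low-order products from blowing up --- that makes the construction possible; this is exactly the feature absent for the translations $\tau_a=e^{aD}$, whose symbol does not vanish at $0$, consistent with the obstruction recorded in Theorem \ref{T:-1}.
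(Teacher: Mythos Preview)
Your reduction to the transitivity-type criterion is correct and is exactly Proposition~\ref{C:111} (Bayart--Matheron) in the paper; that part needs no further work.

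The genuine gap is precisely where you flag it: the control of the off-diagonal and lower-order terms. You write down the inequality
\[
\Big|P\big(\tfrac1m\textstyle\sum_k\alpha_k\nu_k\big)\Big|<\prod_k|P(\nu_k)|^{\alpha_k/m}
\]
but do not prove that a choice of frequencies $\nu_k$ (still forming a set with an accumulation point, so that the exponentials they generate are dense) exists for which this holds for \emph{all} admissible $\alpha$. ``Widely separated values of $|P(\nu_k)|$'' does not work in general: already for $P(z)=z$ and real positive $\nu_k$, the left side grows linearly in the largest $\nu_k$ while the right side grows like a proper power of it, so the inequality fails. An induction on $m$ adding ``one new frequency at a time'' runs into the same issue, because a new high frequency can interact badly with the old ones. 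You also have to deal with the mixed terms $u^{j-\ell}w^{\ell}$, which are polynomials times exponentials; these are not eigenvectors of $P(D)$, and the polynomial-in-$n$ growth factors you mention must be shown to be beaten by an exponential decay you have not yet secured.

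It is worth comparing with what the paper actually does. The paper does not reprove Theorem~\ref{T:0} in full; it cites the shift-based arguments of \cite{shkarin2010on,bayart_matheron2009dynamics,bes_conejero_papathanasiou2016convolution} for that. The paper's own contribution, Theorem~\ref{T:1}, takes a purely eigenfunction route: it approximates \emph{both} the source in $U$ and the target in $V$ by exponential sums (no polynomial $u$), with frequencies lying respectively in a small sector $\Lambda$ near $0$ and on a strictly convex arc $\Gamma$ of a level curve $|\Phi|=r>1$. The strict convexity is exactly the mechanism that delivers your geometric-mean inequality: any nontrivial convex combination of distinct points of $\Gamma$ lands in the interior of $\conv(\Gamma)$, where $|\Phi|<r$, while scaled-down combinations land in the region $\Omega$ where $|\Phi|<1$ (Lemma~\ref{L:1} and \eqref{eq:L1.1}--\eqref{eq:L1.2}). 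In other words, the paper replaces your unresolved ``spread out the $\nu_k$'' step by a concrete geometric choice of frequencies on a level curve. Note, however, that this geometric hypothesis is not known to hold for every polynomial with $P(0)=0$ (see the remark and Figure~\ref{fig:rose} at the end of Section~\ref{S:T1}), so Theorem~\ref{T:1} complements rather than subsumes Theorem~\ref{T:0}. If you want a complete argument for Theorem~\ref{T:0} along your lines, you should either supply the missing frequency-selection lemma or revert to the shift-based construction of the cited references, which avoids exponentials altogether.
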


Motivated by the above results we consider the following question.
\begin{question} \label{Q:2}
{\em Let $\Phi\in H(\C )$ be of exponential type so that the convolution operator $\Phi(D)$ supports a hypercyclic algebra. Must $\Phi$ be a polynomial?  Must $\Phi(0)=0$?}
\end{question}

In Section~\ref{S:T1} we answer both parts of Question~\ref{Q:2} in the negative, by establishing for example that $\Phi(D)$ supports a hypercyclic algebra when $\Phi(z)=\mbox{cos}(z)$ and when $\Phi(z)=z e^z$ (Example~\ref{E:cos(z)} and Example~\ref{E:ze^z}), as well as when $\Phi(z)=(a_0+a_1 z^n)^k$ with $|a_0|\le 1$ and $0 \ne a_1$ and when $\Phi(z)=e^z-a$ with $0<a\le 1$ (Corollary~\ref{C:2} and Example~\ref{E:e^z-a}). All such examples 
are derived from our main result:

 \begin{theorem}   \label{T:1}  
Let $\Phi\in H(\C )$ be 
 of finite exponential  type so that the level set $\{ z\in\mathbb{C}: \ |\Phi(z)|=1 \}$ contains a non-trivial, strictly convex compact arc $\Gamma_1$ satisfying
\begin{equation} \label{eq:T1.1}
\conv(\Gamma_1\cup \{ 0 \} ) \setminus ( \Gamma_1\cup \{ 0 \} ) \subseteq \Phi^{-1} (\mathbb{D}).
\end{equation}
Then the set of entire functions
that generate a hypercyclic algebra for the convolution operator $\Phi(D)$ is residual in $H(\C)$. 
\end{theorem}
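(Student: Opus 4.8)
The plan is to run a Baire category argument based on a transitivity criterion for hypercyclic algebras, following the scheme of Bayart and Matheron and its refinement in the authors' earlier work. Writing $T=\Phi(D)$, it suffices to produce, for every positive integer $m$, every pair of nonempty open sets $U,V\subseteq H(\C)$, and every neighborhood $W$ of $0$, a function $f\in U$ and an index $n\in\N$ with
\[
T^n(f^m)\in V \quad\text{and}\quad T^n(f^j)\in W \quad (1\le j<m).
\]
Once this ``algebra transitivity'' is established, the set of generators of a hypercyclic algebra contains a countable intersection of open dense sets (openness is immediate from the continuity of $f\mapsto T^n(f^j)$, and density is precisely the displayed statement), hence is residual. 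I would therefore spend essentially all the effort on this single approximation claim.

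The engine of the construction is the eigenfunction relation $T(e^{wz})=\Phi(w)\,e^{wz}$, together with the product formula $T^n\!\left(q\,e^{\lambda z}\right)=e^{\lambda z}\,[\Phi(D+\lambda)]^n q$, whose operator $\Phi(D+\lambda)$ has leading coefficient $\Phi(\lambda)$; thus $T^n$ acts on a term of frequency $\lambda$ essentially by the scalar $\Phi(\lambda)^n$, up to polynomial-in-$n$ corrections that the finite exponential type of $\Phi$ keeps under control. I would first approximate the center of $U$ by an exponential polynomial $\tilde u=\sum_l c_l e^{\nu_l z}$ whose frequencies $\nu_l$ all lie in a small neighborhood of $0$; by \eqref{eq:T1.1}, every sum of at most $m$ such frequencies stays interior to $\conv(\Gamma_1\cup\{0\})$, where $|\Phi|<1$, so that $T^n(\tilde u^{\,j})\to 0$ for all $j\le m$. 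Next, taking the target to be an exponential polynomial $\sum_k b_k e^{\mu_k z}$ with finitely many distinct, well-separated frequencies $\mu_k$ lying just outside $\Gamma_1$, where $|\Phi|>1$ (such functions are dense, since the admissible frequencies range over a set with accumulation points), I set $f=\tilde u+P$ with
\[
P=\sum_k \delta_k\,e^{(\mu_k/m)z},
\]
choosing the scalars $\delta_k$ so that the diagonal part of $P^m$, namely $\sum_k \delta_k^m\,\Phi(\mu_k)^n e^{\mu_k z}$, maps under $T^n$ onto the target. Because $|\Phi(\mu_k)|>1$, the required $\delta_k$ tend to $0$ as $n\to\infty$, keeping $f$ inside $U$.

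The crux, and the step I expect to be the main obstacle, is controlling the off-diagonal part of $f^m$. Expanding $f^m=(\tilde u+P)^m$ yields, besides the wanted diagonal contribution, two kinds of unwanted terms. Those containing at least one factor $\tilde u$ have frequency equal to a sum of fewer than $m$ of the $\mu_k/m$ plus a small shift from $\tilde u$; being scaled strictly toward $0$, they remain interior to $\conv(\Gamma_1\cup\{0\})$ and are damped by \eqref{eq:T1.1}. The genuinely off-diagonal powers of $P$ have frequency $\tfrac1m(\mu_{k_1}+\cdots+\mu_{k_m})$ with the indices not all equal. Here is exactly where strict convexity of $\Gamma_1$ enters: an average of distinct points of a strictly convex arc lies a definite distance \emph{inside} the chord, hence inside $\conv(\Gamma_1\cup\{0\})$, so by \eqref{eq:T1.1} that frequency satisfies $|\Phi|<1$ and the term is annihilated by $T^n$. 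Were $\Gamma_1$ merely convex, such averages could remain on the level set $|\Phi|=1$ and spoil the limit.

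Making this quantitative is the delicate part. I would keep the $\mu_k$ pairwise separated on $\Gamma_1$, extract from strict convexity a uniform lower bound for how far the off-diagonal averages sit inside the region $\{|\Phi|<1\}$, and then choose the ``just outside'' distance of the $\mu_k$ from $\Gamma_1$ small enough that this interior gain dominates; the polynomial-in-$n$ corrections coming from the factors $\tilde u$ and from $[\Phi(D+\lambda)]^n$ are absorbed because they are beaten by the geometric rates $|\Phi|^n$. The same scaling explains the lower powers: every frequency arising in $f^j$ for $j<m$ is a sum of at most $j$ of the $\mu_k/m$ (plus small shifts), hence scaled by a factor $<1$ into the interior, so $T^n(f^j)\to 0$. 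Assembling these estimates gives $T^n(f^m)\to\text{target}$ and $T^n(f^j)\to 0$ for $j<m$, which is the required transitivity.
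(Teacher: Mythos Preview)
Your proposal is correct and follows essentially the same approach as the paper: apply the Bayart--Matheron criterion (Proposition~\ref{C:111}), build the approximant $f=A+R$ from exponential sums with frequencies near $0$ and near the arc, use the eigenfunction identity $\Phi(D)e^{\lambda z}=\Phi(\lambda)e^{\lambda z}$, and invoke strict convexity so that off-diagonal averages of the $\mu_k$ fall strictly inside the region where $|\Phi|$ is small. The paper differs only in execution: it packages the geometric step into a separate lemma (Lemma~\ref{L:1}) that produces an honest strictly convex level arc $\Gamma\subset\Phi^{-1}(r\partial\D)$ for some $r>1$ together with a small sector $\Lambda$ near $0$ satisfying the needed inclusions exactly, rather than perturbing points off $\Gamma_1$ and arguing by continuity; and since every function in the construction is a finite sum of pure exponentials, $\Phi(D)^q$ acts on each term by the exact scalar $\Phi(\lambda)^q$, so the ``polynomial-in-$n$ corrections'' you mention never appear and can be dropped.
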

Here for any  $A\subset \C$ the symbol $\conv(A)$ denotes its convex hull, and
$\D$ denotes the open unit disc.
Also, an arc $\mathcal{C}$ is said to be strictly convex provided for each $z_1, z_2$ in $\mathcal{C}$ the segment $\conv(\{ z_1, z_2\})$ intersects $\mathcal{C}$ at at most two points.
\vspace{.1in}

In Section~\ref{S:CompositionOperators} we consider the following question, motivated by Theorem~\ref{T:-1}:

\begin{question} \label{Q:1}
{\em Can a multiplicative operator on a $F$-algebra support a hypercyclic algebra? In particular, can a composition operator support a hypercyclic algebra on some space $H(\Omega)$ of holomorphic functions on a planar domain $\Omega$? }
\end{question}
The study of hypercyclic composition operators on spaces of holomorphic functions may be traced back to the classical examples by Birkhoff~\cite{birkhoff1929demonstration} and by Seidel and Walsh~\cite{SeWa41}, and is described in a recent survey article by Colonna and Mart\'{i}nez-Avenda\~{n}o \cite{ColAve2017}. 
Grosse-Erdmann and Mortini showed that the space $H(\Omega)$ of holomorphic functions on a  planar domain $\Omega$ supports a hypercyclic composition operator if and only if $\Omega$ is either simply connected or infinitely connected \cite{GroMor09}.

 We show in Section~\ref{S:CompositionOperators} that a given multiplicative operator $T$ on an $F$-algebra $X$ supports a hypercyclic algebra if and only if $T$ is hypercyclic and for each non-constant polynomial $P$ vanishing at zero the map $X\to X$, $f\mapsto  P(f)$ has dense range (Theorem~\ref{P:duo}). We use this to derive that for each $0\ne a\in\R$ the translation operator $\tau_a$ supports a hypercyclic algebra on $C^\infty(\R, \C)$ (Corollary~\ref{C:HcAsmoothtranslationsC}) but fails to support a hypercyclic algebra on $C^\infty(\R, \R)$ (Corollary~\ref{C:T_aR}).  Here by $C^\infty(\R, \K)$ we denote the Fr\'{e}chet space of $\K$-valued infinitely differentiable functions on $\R$ whose topology is given by the seminorms
\[
P_k(f)=\mbox{max}_{0\le j\le k} \mbox{max}_{t\in [-k, k]} |f^{(j)}(t) | \ \ \ \ (f\in C^\infty(\R,\K),\ k\in\N).
\] 
Finally, we show that no weighted composition operator $C_{\omega, \varphi}:H(\Omega)\to H(\Omega)$, $f\mapsto \omega (f\circ \varphi)$, supports a supercyclic algebra (Theorem~\ref{T:trio}). 
Recall that a vector $f$ in an $F$-algebra $X$ is said to be {\em 
supercyclic} for a given operator $T:X\to X$ provided 
\[
\C\cdot \mbox{Orb}(f, T) =\{ \lambda T^nf: \ \lambda\in\C, n=0,1,\dots \}
\]
is dense in $X$. 
Accordingly, any subalgebra of $X$ consisting entirely (but zero) of supercyclic vectors for $T$ is said to be a {\em supercyclic algebra}.

 We refer to \cite{grosse-erdmann_peris-manguillot2011linear, bayart_matheron2009dynamics} for a broad view of the field of linear dynamics and to  \cite{aron_bernal-gonzalez_pellegrino_seoane-sepulveda2015lineability,bernal-gonz\'alez_pellegrino_seoane} for an overview of the search of algebraic structures in non-linear settings.

\section{Proof of Theorem~\ref{T:1} and its consequences} \label{S:T1}

The proofs of Theorem~\ref{T:0} and of its earlier versions
exploit the shift-like behaviour of the operator $D$ on $H(\C )$ \cite{shkarin2010on, bayart_matheron2009dynamics, bes_conejero_papathanasiou2016convolution}. 
Our approach for Theorem~\ref{T:1} exploits instead the rich source of eigenfunctions that convolution operators on $H(\C)$ have (i.e.,
\[
\Phi(D) (e^{\lambda z})=\Phi (\lambda) e^{\lambda z} 
\]
for each $\lambda\in \C$ and each $\Phi\in H(\C)$ of exponential type) as well as the following key result by Bayart and Matheron:

\begin{proposition} \label{C:111}   {\bf {\rm (Bayart-Matheron~\cite[Remark~8.26]{bayart_matheron2009dynamics}})}
Let $T$ be an operator on a separable $F$-algebra $X$ so that for each triple $(U, V, W)$ of non-empty open subsets of $X$ with $0\in W$ and for each $m\in\N$ there exists $P\in U$ and $q\in\N$ so that
\begin{equation}  \label{eq:0*}
\begin{cases}
T^q(P^j) \in W \ \ \ \mbox{ for } 0\le j < m,\\
T^q (P^m)\in V.
\end{cases}
\end{equation}
Then the set of  elements of $X$ that generate a hypercyclic algebra for $T$ is residual in $X$.
\end{proposition}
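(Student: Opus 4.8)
The plan is to prove residuality directly from the Baire category theorem, by producing a comeager set of vectors each generating a hypercyclic algebra and by reducing the (uncountable) requirement ``$Q(f)$ is hypercyclic for every non-constant polynomial $Q$ with $Q(0)=0$'' to a countable family of \emph{open} conditions on the individual powers $f, f^2, f^3, \dots$. First I would fix the countable bookkeeping: since $X$ is a separable $F$-space it admits a countable base $(V_s)_s$ of non-empty open sets, and being metrizable it admits a countable base $(W_t)_t$ of neighbourhoods of $0$. For each $m\in\N$ and indices $s,t$ set
\[
A(m,s,t)=\Big\{ f\in X:\ \exists\, q\in\N \text{ with } T^q(f^j)\in W_t\ (1\le j<m)\text{ and } T^q(f^m)\in V_s \Big\}.
\]
Each $A(m,s,t)$ is open, because for fixed $q$ the map $f\mapsto (T^q(f),\dots,T^q(f^m))$ is continuous (multiplication in $X$ is continuous and $T$ is an operator), so the conditions cut out an open set, and $A(m,s,t)$ is the union of these over $q\in\N$.

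The density of each $A(m,s,t)$ is exactly where the hypothesis enters. Fixing $m,s,t$ and a non-empty open $U$, I would apply \eqref{eq:0*} to the triple $(U,V_s,W_t)$ — legitimate since $0\in W_t$ — and to this same $m$, obtaining $P\in U$ and $q\in\N$ with $T^q(P^j)\in W_t$ for $0\le j<m$ (in particular for $1\le j<m$) and $T^q(P^m)\in V_s$. Thus $P\in A(m,s,t)\cap U$, so $A(m,s,t)$ is dense. As an $F$-space is a Baire space, the countable intersection $R=\bigcap_{m,s,t}A(m,s,t)$ is then a dense $G_\delta$, hence residual.

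It remains to verify that every $f\in R$ generates a hypercyclic algebra, which is the substantive step. Given a non-zero polynomial $Q(z)=\sum_{j=1}^m a_j z^j$ with $a_m\ne 0$, a target $g\in X$, and $\delta>0$, I would write
\[
T^q(Q(f))-g=a_m\big(T^q(f^m)-a_m^{-1}g\big)+\sum_{j=1}^{m-1}a_j\,T^q(f^j),
\]
and use continuity of scalar multiplication together with subadditivity of the $F$-norm to choose $s$ with $V_s$ a small neighbourhood of $a_m^{-1}g$ and $t$ with $W_t$ small enough that the memberships $T^q(f^m)\in V_s$ and $T^q(f^j)\in W_t$ $(j<m)$ force $\|T^q(Q(f))-g\|<\delta$. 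Because $f\in A(m,s,t)$ supplies a single iterate $q$ realizing all these memberships at once, $T^q(Q(f))$ lands within $\delta$ of $g$; as $g,\delta$ were arbitrary, $Q(f)\in HC(T)$. Hence the algebra generated by $f$ consists (but $0$) of hypercyclic vectors, so $R$ is contained in the set of generators of a hypercyclic algebra, which is therefore residual.

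The main obstacle — indeed the only non-formal point — is this last reduction: one must recognize that steering the top power $f^m$ toward the rescaled target $a_m^{-1}g$ while simultaneously driving every lower power $f^j$ $(j<m)$ into an arbitrarily small neighbourhood of $0$, \emph{with one common iterate} $q$, already suffices to approximate an arbitrary value of $Q(f)$. Linearity of $T^q$ makes the top-degree term dominant and renders the lower-order contributions negligible; getting the dependence of $s,t$ on $(Q,g,\delta)$ right (and ensuring a single $q$ serves all the powers) is the crux. Everything else — openness, the direct appeal to \eqref{eq:0*} for density, and the Baire argument — is routine.
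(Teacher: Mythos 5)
Your proof is correct, and since the paper itself does not prove Proposition~\ref{C:111} but quotes it from Bayart--Matheron, the right comparison is with the cited source: your argument is exactly the standard Baire-category proof given there, namely open sets $A(m,s,t)$ indexed by countable bases, density of each via the hypothesis applied to the triple $(U,V_s,W_t)$, and the reduction of hypercyclicity of $Q(f)$, $Q(z)=\sum_{j=1}^m a_jz^j$ with $a_m\ne 0$, to steering $f^m$ toward $a_m^{-1}g$ while driving the lower powers to $0$ with a single common iterate $q$. The two small points you gloss are indeed harmless: discarding the $j=0$ case of \eqref{eq:0*} only means you use a weaker form of the hypothesis, and the $F$-norm bookkeeping works by continuity of the maps $x\mapsto a_jx$, e.g.\ choosing a basic $V_s$ inside the open set $\{x:\|a_mx-g\|<\delta/2\}$ and a basic $W_t$ inside $\bigcap_{1\le j<m}\{x:\|a_jx\|<\delta/(2(m-1))\}$, after which subadditivity of the $F$-norm yields $\|T^q(Q(f))-g\|<\delta$ (and, as a byproduct, $Q(f)\ne 0$, so the algebra generated by $f$ lies in $HC(T)\cup\{0\}$ as required).
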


We start by noting the following invariant for composition operators with homothety symbol.

\begin{lemma}  \label{L:0} 
Let $\Phi \in H(\C )$ be of exponential type, and let $\varphi :\C \to \C$, $\varphi (z)=a z$ be a homothety on the plane, where $0\ne a\in\C$. Then  $\Phi_a :=C_\varphi (\Phi)$ is of exponential type and 
\[
C_\varphi (HC(\Phi_a (D)))=HC (\Phi (D)).
\]
In particular, the algebra isomorphism $C_\varphi :H(\C )\to H(\C )$ maps hypercyclic algebras of $\Phi_a (D)$ onto hypercyclic algebras of $\Phi (D)$.
\end{lemma}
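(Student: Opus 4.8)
The plan is to show that the homothety composition operator $C_\varphi$ \emph{intertwines} $\Phi_a(D)$ with $\Phi(D)$, so that these two convolution operators are conjugate via the continuous algebra isomorphism $C_\varphi$; the statement about hypercyclic vectors then drops out of the standard conjugacy principle, and the statement about hypercyclic algebras follows because $C_\varphi$ is multiplicative. First I would dispose of the claim that $\Phi_a$ is of exponential type: writing $\Phi(z)=\sum_{n\ge 0}a_nz^n$ with $|a_n|\le MR^n/n!$, one has $\Phi_a(z)=\Phi(az)=\sum_{n\ge 0}a_na^nz^n$, and the coefficients satisfy $|a_na^n|\le M(R|a|)^n/n!$, so $\Phi_a$ has exponential type with constants $M$ and $R|a|$.

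The central step is the intertwining identity
\[
\Phi(D)\,C_\varphi = C_\varphi\,\Phi_a(D),
\]
which at the level of operators reflects the commutation relation $D\,C_\varphi = a\,C_\varphi D$. Rather than track convergence in the series defining $\Phi(D)$, I would verify the identity on the exponential functions $e_\lambda(z):=e^{\lambda z}$, whose linear span is dense in $H(\C)$. Since $C_\varphi(e_\lambda)=e_{a\lambda}$, one computes $\Phi(D)C_\varphi(e_\lambda)=\Phi(D)e_{a\lambda}=\Phi(a\lambda)e_{a\lambda}$; on the other hand $\Phi_a(D)e_\lambda=\Phi_a(\lambda)e_\lambda=\Phi(a\lambda)e_\lambda$, so $C_\varphi\Phi_a(D)(e_\lambda)=\Phi(a\lambda)e_{a\lambda}$. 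The two sides agree on each $e_\lambda$, and as both $\Phi(D)C_\varphi$ and $C_\varphi\Phi_a(D)$ are continuous linear operators on $H(\C)$ coinciding on a dense subspace, they are equal. An immediate induction then upgrades this to $\Phi(D)^q C_\varphi = C_\varphi\Phi_a(D)^q$ for every $q\in\N$.

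To conclude, I would note that $\varphi$ is invertible with $\varphi^{-1}(z)=z/a$, so $C_\varphi$ is a bicontinuous algebra automorphism of $H(\C)$ with inverse $C_{\varphi^{-1}}$. Applying $C_\varphi$ to the orbit of an arbitrary $g$ under $\Phi_a(D)$ and invoking the intertwining gives $\mathrm{Orb}(C_\varphi g,\Phi(D))=C_\varphi\bigl(\mathrm{Orb}(g,\Phi_a(D))\bigr)$; since $C_\varphi$ is a homeomorphism it preserves density, whence $g\in HC(\Phi_a(D))$ if and only if $C_\varphi g\in HC(\Phi(D))$. This is exactly $C_\varphi(HC(\Phi_a(D)))=HC(\Phi(D))$. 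The final clause is then immediate: if $A$ is a subalgebra of $H(\C)$ with $A\setminus\{0\}\subseteq HC(\Phi_a(D))$, then by multiplicativity $C_\varphi(A)$ is a subalgebra with $C_\varphi(A)\setminus\{0\}\subseteq HC(\Phi(D))$, so hypercyclic algebras map to hypercyclic algebras.

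This lemma is essentially formal, so I do not expect a serious obstacle; the only point demanding a little care is justifying the intertwining as an honest operator identity on all of $H(\C)$ rather than a purely formal one on power series. Testing against the eigenfunctions $e_\lambda$ and then extending by density and continuity is precisely what sidesteps the convergence bookkeeping, and it also dovetails with the eigenfunction philosophy underlying the whole approach to Theorem~\ref{T:1}.
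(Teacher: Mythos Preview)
Your proof is correct and follows essentially the same strategy as the paper: both establish the conjugacy $\Phi(D)\,C_\varphi = C_\varphi\,\Phi_a(D)$ and deduce the statements about hypercyclic vectors and algebras from it. The only difference is cosmetic: the paper verifies the intertwining directly from the power series via the identity $D^k C_\varphi = a^k C_\varphi D^k$, whereas you check it on the eigenfunctions $e_\lambda$ and extend by density --- both are perfectly good routes to the same operator identity.
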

\begin{proof}
For each $f\in H(\C)$ we have  $C_\varphi (f)(z)=f( a z)$ $(z\in\C)$, and thus 
\[
D^k C_\varphi (f) = a^k C_\varphi D^k (f)      \ \  (k=0,1,2,\dots ).
\]
Hence given $\Phi (z)=\sum_{k=0}^\infty c_k z^k$ of exponential type  $\Phi_a :=C_\varphi (\Phi)$ is clearly of exponential type and
\[
\begin{aligned}
\Phi (D) C_\varphi (f) &= \sum_{k=0}^\infty c_k D^k C_\varphi (f) = \sum_{k=0}^\infty c_k a^k C_\varphi D^k (f)\\ & = C_\varphi \left(   \sum_{k=0}^\infty c_k a^k D^k \right) (f) \\
& = C_\varphi \Phi_a (D) (f) \ \ \ \ (f\in H(\C )).
\end{aligned}
\]
So $\Phi_a(D)$ is conjugate to $\Phi(D)$ via the algebra isomorphism $C_\varphi$. 
\end{proof}
 
\begin{remark} \label{R:2} \
\begin{enumerate}
\item \  Lemma~\ref{L:0} is a particular case of the following Comparison Principle for Hypercyclic Algebras. {\em Any operator $T:X\to X$ on a  Fr\'{e}chet algebra $X$ that is quasi-conjugate via a multiplicative operator $Q:Y\to X$ to an operator $S:Y\to Y$ supporting a hypercyclic algebra must also support a hypercyclic algebra. Indeed, if $A$ is a hypercyclic algebra for $S$, then $Q(A)=\{ Qy: y\in A \}$ is a hypercyclic algebra for $T$.} 

\item \  
If $\Phi \in H(\C )$ satisfies the assumptions of Theorem~\ref{T:1}, then so will $\Phi_a:=C_\varphi(\Phi)$ for any non-trivial homothety $\varphi(z)=az$.  Indeed, notice that for any $r>0$ we have
\[
a \Phi_a^{-1} (r\partial\D)=\Phi^{-1}(r\partial\D).
\]
Hence if $\Gamma \subset \Phi^{-1}(r\partial\D)$  is a smooth arc satisfying
\[
\conv(\Gamma\cup\{ 0 \} ) \setminus (\Gamma \cup \{ 0 \}) \subset \Phi^{-1} (r\D),
\]
then $\Gamma_a:= \frac{1}{a} \Gamma \subset \Phi_a^{-1} (r\partial\D)$ is a smooth arc satisfying
\[
\conv(\Gamma_a\cup\{ 0 \} ) \setminus (\Gamma_a \cup \{ 0 \}) \subset \Phi_a^{-1} (r\D).
\]
Moreover, if $\Gamma$ is a strictly convex, compact, simple and non-closed arc whose convex hull does not contain the origin, say, then $\Gamma_a$ will share each corresponding property as  these are invariant under homothecies. In particular, the angle difference between the endpoints of $\Gamma$ is the same as the corresponding quantity in $\Gamma_a$.
\end{enumerate}
\end{remark}

The next result ellaborates on the geometric assumption of Theorem~\ref{T:1}. Here for any $0\ne z\in\C$ we denote by $\arg(z)$ the argument of $z$ that belongs to  $[0, 2\pi)$.

\begin{proposition} \label{P:R2}
 Let $\Phi \in H(\C )$ and let $\Gamma \subset \Phi^{-1}(r \partial\D )$ be a simple, strictly convex arc with endpoints $z_1$, $z_2$ satisfying $0<\arg(z_1)<\arg(z_2)<\pi$ and $\Re(z_1)\ne \Re(z_2)$, where $r>0$. Suppose that
 $0\notin\conv(\Gamma)$ and that
\begin{equation}\label{eq:P:R2.0}
\Omega := \conv(\Gamma \cup \{ 0 \} ) \setminus (\Gamma \cup \{ 0 \}) \subset \Phi^{-1} (r \D ).
\end{equation}
Then $S(0, z_1, z_2)\setminus \Gamma$ consists of two connected components of which $\Omega$ is the bounded one, where
\[
S(0, z_1, z_2) = \{ 0\ne w\in \C: \ \arg(z_1)\le \arg(w) \le \arg(z_2) \}.
\]
Moreover, 
\[
\  \Omega = \{ tz:\ (t, z)\in (0,1)\times \Gamma \} = \{ tz:\ (t, z)\in (0,1) \times \conv(\Gamma) \}, 
\]
and $
\partial \Omega = [0, z_1)\cup (0, z_2) \cup \Gamma$. In addition, 
\[
\Gamma\cap (I \times (0,\infty))=\Graph(f)\cup \{ z_1, z_2\}
\]
for some smooth function $f:I\to\R$, where $I$ is the closed interval with endpoints $\Re(z_1)$ and $ \Re(z_2)$ and where $f$ is concave up if $\Re(z_1)<\Re(z_2)$ and concave down if $\Re(z_2)<\Re(z_1)$.
\end{proposition}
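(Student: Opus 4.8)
The plan is to read the statement as one of plane convex geometry in which the sole analytic ingredient is the maximum modulus principle. Put $H:=\conv(\Gamma\cup\{0\})$, so that by the defining equation \eqref{eq:P:R2.0} we have $\Omega=H\setminus(\Gamma\cup\{0\})$; note $H$ is a genuine convex body, since $0$ does not lie on the line through the endpoints of $\Gamma$. Everything will follow from one claim: \emph{no point of $\Gamma$ lies in $\operatorname{int}(H)$, i.e. $\Gamma\subseteq\partial H$.}

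I would establish this claim first, and it is the only place the hypotheses on $\Phi$ enter. Suppose some $p\in\Gamma$ lay in $\operatorname{int}(H)$. Then a disc $U$ about $p$ sits inside $H$ and avoids $0$; since $U\setminus\Gamma\subseteq\Omega\subseteq\Phi^{-1}(r\D)$ we would have $|\Phi|<r$ off $\Gamma$, while $|\Phi|=r$ on $\Gamma\cap U$ because $\Gamma\subseteq\Phi^{-1}(r\partial\D)$. Hence $|\Phi|\le r=|\Phi(p)|$ on all of $U$, making $p$ an interior local maximum of $|\Phi|$ and forcing $\Phi$ to be constant near $p$ — impossible, since a nontrivial level arc together with a nonempty strict sublevel set $\Omega$ makes $\Phi$ non-constant. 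This single step silently rules out the two configurations one must avoid: that $0$ lie on the same side as $\Gamma$ of the chord line through $z_1,z_2$ (which would bury interior points of $\Gamma$ inside $H$), and that $\Gamma$ escape the sector $S(0,z_1,z_2)$ (which would swallow an endpoint $z_i$ into $\operatorname{int}(H)$).

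The rest is convex bookkeeping on top of $\Gamma\subseteq\partial H$. Strict convexity, via the chord condition in the definition, gives $\Gamma\cap L=\{z_1,z_2\}$ for the line $L$ through $z_1,z_2$, so $z_1,z_2$ are the two ends of the arc on the Jordan curve $\partial H$; as $0$ is the only further extreme point of $H$, the complementary boundary arc from $z_2$ to $z_1$ runs through $0$ and, by convexity, is exactly $[z_2,0]\cup[0,z_1]$. Thus $\partial H=\Gamma\cup[0,z_1]\cup[0,z_2]$, which is precisely $\partial\Omega=[0,z_1)\cup(0,z_2)\cup\Gamma$, and it shows $H\subseteq\overline{S(0,z_1,z_2)}$; hence $\Gamma$ is a cross-cut of the sector joining its two bounding rays and separates $S(0,z_1,z_2)$ into two components, the bounded one being $\Omega$. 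For the two descriptions of $\Omega$ I would note that each ray from $0$ with argument strictly between $\arg(z_1)$ and $\arg(z_2)$ enters $H$ across the chord and leaves across $\Gamma$, giving $\Omega=\{tz:(t,z)\in(0,1)\times\Gamma\}$, and that every point of $\conv(\Gamma)$ has the form $sz'$ with $z'\in\Gamma$ and $s\in(0,1]$, which upgrades this to the equality with $\{tz:(t,z)\in(0,1)\times\conv(\Gamma)\}$.

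For the graph-and-concavity statement I would use strict convexity together with $\Re(z_1)\ne\Re(z_2)$. A vertical tangent of $\Gamma$ at an abscissa interior to $I$ would produce a leftmost or rightmost point of $\conv(\Gamma)$ strictly inside $I$, contradicting that $z_1,z_2$ already realize the extreme abscissae of the chord; hence over $\operatorname{int}(I)$ the arc meets each vertical line once, so $\Gamma\cap(I\times(0,\infty))$ is the graph of a function $f$ with the endpoints appended. Being one of the two envelopes of the convex body $\conv(\Gamma)$, $f$ is concave, and the direction is fixed by orientation: writing the far side of the chord as the right side of the directed segment $z_1\to z_2$, its outward normal points upward exactly when $\Re(z_2)<\Re(z_1)$, giving concave down in that case and concave up when $\Re(z_1)<\Re(z_2)$. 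Smoothness of $f$ is the implicit function theorem applied to the real-analytic curve $|\Phi|=r$, once one checks $\Phi'\ne0$ along $\Gamma$ (a zero of $\Phi'$ on a positive level set of $|\Phi|$ is a saddle of the subharmonic function $|\Phi|$, incompatible with $\Gamma$ bounding $\Omega$ on one side). The conceptual crux is the maximum-modulus claim $\Gamma\subseteq\partial H$; the main labor, rather than a genuine obstacle, is the convex-geometry synthesis, and in particular checking the concavity direction and the single-valuedness over $I$.
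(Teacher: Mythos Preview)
Your argument is correct and follows the same skeleton as the paper's: the single analytic step is the maximum modulus principle forcing $\Gamma\cap\operatorname{int}(\conv(\Gamma\cup\{0\}))=\emptyset$, after which everything is plane convex geometry. Your organization is somewhat cleaner---you deduce $\partial H=\Gamma\cup[0,z_1]\cup[0,z_2]$ directly from the fact that the extreme points of $H$ lie in $\Gamma\cup\{0\}$, whereas the paper argues separately that $\Gamma$ stays in the angular sector and then that each radial ray meets $\Gamma$ exactly once. One small point: your single-valuedness argument (``a vertical tangent at an interior abscissa would force an extreme abscissa strictly inside $I$'') is correct but terse; it relies on the fact that on a convex arc a vertical tangent marks a global leftmost or rightmost point, which you should make explicit, since strict convexity of $\Gamma$ alone does not prevent $\Gamma$ from extending beyond the vertical strip over $I$. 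The paper handles this instead by a direct ad hoc argument showing that two points of $\Gamma$ over the same abscissa would trap one of them inside a triangle contained in $\operatorname{int}(H)$. Finally, your justification of smoothness via $\Phi'\neq 0$ along the interior of $\Gamma$ (a critical point of $\Phi$ on the level set $|\Phi|=r$ would make that level set branch, so neither side of $\Gamma$ could lie entirely in $\{|\Phi|<r\}$) is a genuine addition---the paper simply asserts that $f$ is smooth without argument.
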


{In Figure \ref{fig:proposition7} we illustrate one case of the statement of this Proposition~\ref{P:R2}.}

\begin{figure}[ht] 
\centering
\includegraphics[width=80mm]{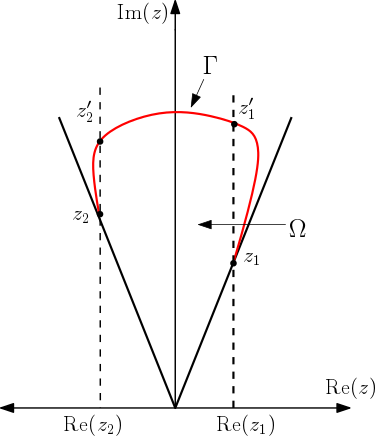}
\caption{A representation of the sets appearing in Proposition \ref{P:R2}.}
\label{fig:proposition7}
\end{figure}

\begin{proof}
Since $|\Phi |\le r$ on $\conv(\Gamma\cup\{ 0\})$ by $\eqref{eq:P:R2.0}$, the maximum modulus principle ensures that 
\begin{equation} \label{eq:R1}
\Gamma \cap \mbox{int}( \conv(\Gamma\cup \{ 0 \} )) =\emptyset.
\end{equation} 
We claim that
\begin{equation} \label{eq:Rclaim}
\Gamma \subset \{ 0 \ne w\in\C: \ \arg(w)\in [\arg(z_1), \arg(z_2)] \}.
\end{equation}
To see this, notice that since $0\notin \conv(\Gamma)$ the arc $\Gamma$ cannot  intersect the ray $\{ t e^{i (\arg(z_2)+\pi)}: t\ge 0 \}$, and by $\eqref{eq:R1}$ it cannot intersect the interior of the triangle $\conv\{ 0, z_1, z_2\}$, either. Also, notice that if $H$ denotes the open half-plane  not containing $0$ and with boundary
\[
\partial H = \{ z_1+t(z_2-z_1):\ t\in\R \},
\]
then 
\begin{equation} \label{eq:R2}
\emptyset = \Gamma\cap H\cap \{0\ne w\in\C: \ \arg(w)<\arg(z_1) \},
\end{equation}
as any $z\in \Gamma \cap H$ with $\arg(z)<\arg(z_1)$ would make $z_1\in \mbox{int}(\conv(\{ z, z_2, 0\}))$, contradicting $\eqref{eq:R1}$. Finally, since $\Gamma$ is simple it now follows from $\eqref{eq:R2}$ that
\[
\emptyset = \Gamma \cap \{ 0\ne w\in \C: \ \arg(w)\in [\pi +\arg(z_2), 2\pi)\cup [0, \arg(z_1)) \},
\]
and thus any $w\in\Gamma$ satisfies $\arg(z_1) \le \arg(w)$. By a similar argument, each $w\in\Gamma$ satisfies  $\arg(w)\le \arg(z_2)$, and $\eqref{eq:Rclaim}$ holds.
Next, using $\eqref{eq:R1}$ and the continuity of the argument on $S(0, z_1, z_2)$ it is simple now to see that for each $\theta\in [\arg(z_1), \arg(z_2)]$ the ray
\[
\{ te^{i\theta} : \ t\ge 0 \}
\]
intersects $\Gamma$ at exactly one point, giving the desired description for $\Omega$. For the final statement, assume $\Re(z_2)< \Re(z_1)$ (the case $\Re(z_1)<\Re(z_2)$ follows with a similar argument).

Notice that for each $x=t \Re(z_2)+(1-t)\Re(z_1)$ with $0<t<1$ there exists a unique $y\in\R$
so that
\begin{equation}  \label{eq:P:R2:2}
(x,y)\in \Gamma \ \mbox{ with } \ y\in [t\Im(z_2)+(1-t)\Im(z_1), \infty).
\end{equation}
Indeed, the  continuous path $\Gamma$ from $z_1$ to $z_2$ lies in $S(0, z_1, z_2)$ and only meets the closed triangle $\conv(\{ 0, z_1, z_2 \})$ at $z_1$ and $z_2$, so the existence of a $y$ verifying $\eqref{eq:P:R2:2}$ follows (it also follows for the cases $t=0,1$, in which case there may exist up to two values per  endpoint, by $\eqref{eq:Rclaim}$).  To see the uniqueness, if $y_2>y_1 > t\Im(z_2)+(1-t)\Im(z_1)$ with $(x, y_1), (x,y_2)\in\Gamma$, then
\[
(x,y_1)\in\mbox{int}(\conv(\{ z_1, z_2, x+iy_2\})\cap \Gamma \subset \Omega \cap \Gamma =\emptyset,
\]
a contradiction.  Hence $\eqref{eq:P:R2:2}$ defines a smooth function $f:[\Re(z_1), \Re(z_2)]\to (0,\infty)$ whose graph $\Gamma_0$
 is a subarc of $\Gamma$, provided that if at either endpoint $x\in\{ \Re(z_1), \Re(z_2)\}$ there are two values $y$ satisfying $x+iy\in\Gamma$ we let $f(x)$ be the largest of such two values. 
\end{proof}

Finally, Lemma~\ref{L:1} below will enable us to apply Proposition~\ref{C:111}.
Recall that for  a planar smooth curve $\mathcal{C}$ with parametrization $\gamma : [0,1]\to \C$, $\gamma (t)=x(t)+i y(t)$, its signed curvature at a point $P=\gamma (t_0)\in \mathcal{C}$ is given by
\[
\kappa (P):= \frac{  x'(t_0) y''(t_0) - y'(t_0) x''(t_0) }{ |\gamma'(t_0)|^3 }.
\]
and its unsigned curvature at $P$ is given by $|\kappa (P)|$.
It is well-known that $|\kappa (P)|$ does not depend on the parametrization selected, and that the signed curvature $\kappa (P)$ depends only on the choice of orientation selected for $\mathcal{C}$.  It is simple to see that any straight line segment has zero curvature. We say that $\mathcal{C}$ is {\em strictly convex} provided each segment with endpoints in the arc only intersects the arc at these points. 
Notice also that for the particular case when $\mathcal{C}$ is given by the graph of a function $y=f(x)$, $a\le x\le b$, (and oriented from left to right), its signed curvature at a point $P=(x_0, f(x_0))$ is given by
\[
\kappa (P)= \frac{ f''(x_0)}{(1+(f'(x_0))^2 )^{\frac{3}{2}}}.
\]
In particular, $\kappa < 0$ on $\mathcal{C}$ if and only if $y=f(x)$ is concave down (i.e.,  $ (1-s) f(a_1) + s f(b_1)<  f((1-s)a_1 + s b_1) $     for  any $s\in (0, 1)$ and any subinterval $[a_1, b_1]$ of $[a,b]$).

\begin{lemma} \label{L:1} 
Let $\Phi\in H(\C )$ be 
 of exponential  type supporting a non-trivial, strictly convex compact arc $\Gamma_1$ contained in
 $\Phi^{-1} (\partial \D)$ so that
\[ 
\conv(\Gamma_1\cup \{ 0 \} ) \setminus ( \Gamma_1\cup \{ 0 \} ) \subseteq \Phi^{-1} (\mathbb{D}).
\] 
 Then for each 
 $m\in \N$ there
 exist $r>1$, a non-trivial, strictly convex smooth arc $\Gamma \subset \Phi^{-1} ( r \partial \D )\cap \{ tz:  \ (t,z)\in (0,\infty)\times \Gamma_1 \}$ and $\epsilon >0$ so that   
\begin{equation} 
\label{eq:L1.1}
\conv(\Gamma \cup \{ 0 \} ) \setminus  \Gamma  \subseteq \Phi^{-1} (r \mathbb{D}).
\end{equation}
and
\begin{equation}
\label{eq:L1.2}
\Lambda + \sum_{k=1}^j  \frac{1}{m} \Gamma \subset \Omega \ \ \mbox{ and } \ \ \sum_{k=1}^j  \frac{1}{m} \Gamma \subset\Omega  \ \mbox{ for each $1 \le j <m$,}
\end{equation}
where
\[
\begin{aligned}
\Omega &:= \conv(\Gamma_1\cup \{ 0 \} ) \setminus ( \Gamma_1\cup \{ 0 \} ) \\
\Lambda &:= \Omega \cap D(0, \epsilon)\cap  \conv(\Gamma\cup \{ 0 \}).  
\end{aligned}
\]
\end{lemma}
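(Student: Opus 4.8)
The plan is to realize $\Gamma$ as a level curve of $\Phi$ sitting just outside $\Gamma_1$, at a level $r>1$ chosen close enough to $1$ (depending on $m$), and then to check the two displayed requirements by elementary convexity together with a maximum modulus argument. First I would normalize the hypotheses: after composing with a homothety $\varphi(z)=az$ and invoking Remark~\ref{R:2}, and after passing to a subarc, I may assume that $\Gamma_1$ meets the hypotheses of Proposition~\ref{P:R2}, so that $\Omega=\{tz:(t,z)\in(0,1)\times\conv(\Gamma_1)\}$ and $\Gamma_1$ is the graph of a smooth strictly concave (or convex) function with endpoints $z_1,z_2$ and $0\notin\conv(\Gamma_1)$. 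Shrinking $\Gamma_1$ slightly away from its endpoints, I may further assume $\Phi'\neq 0$ on $\Gamma_1$ (zeros of $\Phi'$ are isolated) and that the radial direction $z$ is nowhere tangent to $\Gamma_1$ (such tangency points are isolated, since otherwise a subarc of $\Gamma_1$ would be a ray through the origin, contradicting strict convexity).

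Next I would construct $\Gamma$. For $z\in\Gamma_1$ the function $t\mapsto|\Phi(tz)|$ satisfies $|\Phi(z)|=1$, and its derivative at $t=1$ equals $\Re(\overline{\Phi(z)}\,\Phi'(z)\,z)$, which is strictly positive: it cannot be negative (else $|\Phi|>1$ just inside $\Gamma_1$, contradicting $\Omega\subset\Phi^{-1}(\D)$), and it cannot vanish since $\Phi'(z)\neq0$ and the radial direction is transverse to the level set $\{|\Phi|=1\}$ at $z$. By the implicit function theorem there is, for each $r>1$ close to $1$, a smooth function $z\mapsto t(z)>1$ with $|\Phi(t(z)z)|=r$ and $t(z)\to1$ as $r\to1$; set $\Gamma:=\{t(z)z:z\in\Gamma_1\}$, which lies in $\Phi^{-1}(r\partial\D)$ and, by construction, in the cone $\{tz:(t,z)\in(0,\infty)\times\Gamma_1\}$. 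It is smooth and $C^2$-close to $\Gamma_1$; since $\Gamma_1$ is strictly convex it is not a line segment, so its curvature (a real-analytic function along it) is not identically zero, and hence the same holds for $\Gamma$. Passing to a subarc on which the curvature of $\Gamma$ does not vanish, I obtain a non-trivial strictly convex smooth arc. For \eqref{eq:L1.1} I would apply the maximum modulus principle on the compact convex set $\conv(\Gamma\cup\{0\})$: its boundary is $\Gamma$ together with the two segments joining $0$ to the endpoints of $\Gamma$; on $\Gamma$ one has $|\Phi|=r$, and on each such (radial) segment $|\Phi|$ increases through the values $<1$ on $\Omega$, $=1$ on $\Gamma_1$, and $\le r$ beyond it (taking $r$ close to $1$ so the relevant radial derivative stays positive), so $|\Phi|\le r$ with equality only on $\Gamma$, giving \eqref{eq:L1.1}. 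Note also $|\Phi(0)|\le1<r$ since $0$ is a limit of points of $\Omega$.

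For \eqref{eq:L1.2} the key observation is a convexity bound for Minkowski sums of an arc: for $1\le j<m$ and $w_1,\dots,w_j\in\Gamma$,
\[
\frac1m\sum_{k=1}^j w_k=\frac{j}{m}\Big(\frac1j\sum_{k=1}^j w_k\Big)\in\frac{j}{m}\conv(\Gamma),
\]
so $\sum_{k=1}^j\frac1m\Gamma\subseteq\frac{j}{m}\conv(\Gamma)$. Writing each $w_k=t(\zeta_k)\zeta_k$ with $\zeta_k\in\Gamma_1$ and collecting coefficients, every element of $\frac{j}{m}\conv(\Gamma)$ has the form $s\,w$ with $w\in\conv(\Gamma_1)$ and $s=\frac{j}{m}\bar t$, where $\bar t\in[1,T_{\max}]$ is a convex average of the values $t(\zeta_k)$ and $T_{\max}:=\max_{\Gamma_1}t$. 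Choosing $r$ close enough to $1$ that $T_{\max}<\frac{m}{m-1}$ forces $s\le\frac{m-1}{m}T_{\max}<1$, so by the description $\Omega=\{sw:(s,w)\in(0,1)\times\conv(\Gamma_1)\}$ we get $\sum_{k=1}^j\frac1m\Gamma\subset\Omega$; moreover these finitely many compact sums lie in the interior of $\Omega$ (here I use that $\Gamma_1$ was shrunk away from its endpoints, so the relevant $w$ avoid the edges $[0,z_1]$, $[0,z_2]$ of $\partial\Omega$). Finally, since $\Lambda\subseteq D(0,\epsilon)$ and each $\sum_{k=1}^j\frac1m\Gamma$ is a compact subset of the open set $\mathrm{int}(\Omega)$, a single $\epsilon>0$ smaller than the distance from $\bigcup_{1\le j<m}\sum_{k=1}^j\frac1m\Gamma$ to $\partial\Omega$ ensures $\Lambda+\sum_{k=1}^j\frac1m\Gamma\subset\Omega$ for all such $j$, completing \eqref{eq:L1.2}.

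I expect the main obstacle to be producing $\Gamma$ with all required properties simultaneously: strict convexity is not stable under the $C^2$-perturbation from $\Gamma_1$ to the level curve $\Gamma$ when the curvature of $\Gamma_1$ degenerates, so one must exploit real-analyticity to pass to a subarc of non-vanishing curvature while preserving membership in both the cone and the level set, and \eqref{eq:L1.1} requires the radial monotonicity of $|\Phi|$ just outside $\Gamma_1$, which is precisely why $r$ must be taken close to $1$ (and the subarc chosen with the radial direction transverse to $\Gamma_1$). By contrast, the Minkowski containment \eqref{eq:L1.2} is conceptually the crux but follows cleanly from the averaging bound once $r$ is fixed so that $T_{\max}<\frac{m}{m-1}$.
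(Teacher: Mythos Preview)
Your proposal is correct and follows the same overall route as the paper: normalize via Proposition~\ref{P:R2}, produce $\Gamma$ as a nearby level curve $\{|\Phi|=r\}$ with $r>1$ close to $1$, obtain strict convexity by passing to a subarc of nonvanishing curvature, and then verify \eqref{eq:L1.1}--\eqref{eq:L1.2} by convexity and compactness. The one genuine difference is in how $\Gamma$ is parametrized. The paper picks a single point $z_0\in\Gamma_1$ with $\Phi'(z_0)\neq 0$, takes a local holomorphic inverse $g$ of $\Phi$ on a small polar rectangle about $\Phi(z_0)$, and sets $\eta_r=g(\{|w|=r\})$; curvature continuity (hence strict convexity of a subarc) then comes directly from the smoothness of $g$. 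You instead solve $|\Phi(tz)|=r$ radially for $t=t(z)$ over all of (a subarc of) $\Gamma_1$ and rely on $C^2$-convergence of $z\mapsto t(z)z$ to the identity as $r\to 1$. Your radial parametrization pays off downstream: the maximum-modulus argument for \eqref{eq:L1.1} uses radial monotonicity on the two bounding segments $[0,w_i]$, and the explicit Minkowski bound $\sum_{k=1}^j\frac1m\Gamma\subseteq\frac{j}{m}\conv(\Gamma)$ together with the quantitative threshold $T_{\max}<\frac{m}{m-1}$ makes \eqref{eq:L1.2} transparent, whereas the paper simply asserts these inclusions hold ``for $r$ close enough to $1$''. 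One organizational slip to fix: to land the Minkowski sums in the \emph{interior} of $\Omega$ (needed so that a single $\epsilon>0$ works for $\Lambda$), you should keep $\Gamma_1$ and $\Omega$ fixed after the initial normalization and take the final $\Gamma$ over a subarc compactly contained in the relative interior of $\Gamma_1$; your phrase ``$\Gamma_1$ was shrunk away from its endpoints'' conflates shrinking $\Gamma_1$ (which moves $\Omega$) with shrinking $\Gamma$ (which is what the argument actually needs, and what the paper's local construction near $z_0$ achieves automatically).
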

{In Figure \ref{fig:lemma8} we illustrate the different sets appearing in the statement of  Lemma~\ref{L:1}.}

\begin{figure}[ht] 
\centering
\includegraphics[width=80mm]{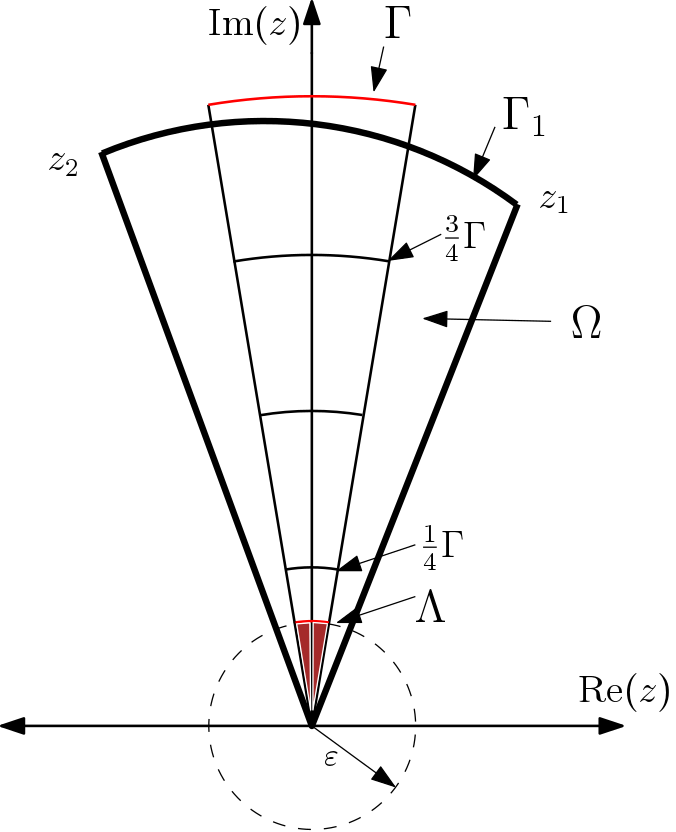}
\caption{The sets appearing in Lemma \ref{L:1}, case $m=4$.}
\label{fig:lemma8}
\end{figure}

\begin{proof}

Since $\Gamma_1$ is strictly convex, replacing it by a subarc if necessary we may further assume by Remark~\ref{R:2}.(2) that $\Gamma_1$ is simple and with endpoints $z_1, z_2$ satisfying 
$0<\arg(z_1)<\arg(z_2)<\pi$ and $\Re(z_2)<\Re(z_1)$ and so that $0\notin\conv(\Gamma_1)$.
By Proposition~\ref{P:R2},
\begin{equation} \label{eq:L1-1}
\Omega =\{ tz: \ (t,z)\in (0,1)\times \conv(\Gamma_1) \} \subset S(0, z_1, z_2),
\end{equation}
with $
\partial\Omega = [0, z_1)\cup \Gamma_1 \cup (0, z_2)$ and we may assume $\Gamma_1$ is the graph of a concave down function $f:[\Re(z_2), \Re(z_1)]\to (0, \infty)$ (i.e., replacing $z_j$ by $z_j'=\Re(z_j)+if(\Re(z_j))$, $j=1,2$, if necessary).
Now, pick $z_0\in \Gamma_1\setminus \{ z_1, z_2 \}$ with $\Phi' (z_0)\ne 0$, and let $w_0:=\Phi(z_0)=e^{i\theta_0}$, where $\theta_0\in [0, 2\pi)$. Choose $\rho>0$ small enough so that the only solution to
\[
\Phi(z)=w_0
\]
in $D(z_0, \rho)$ is at $z=z_0$, and so that $D(z_0, \rho )\cap ([0, z_1] \cup [0, z_2])=\emptyset$.  Next, pick
\[
0<s<\mbox{min}\{ |\Phi(z)-w_0|: \ |z-z_0|=\rho \}
\]
and let $0<\delta < \mbox{min}\{ 1, s\}$ so that the polar rectangle
\[
R_\delta := \{ z=r e^{i\theta }: \ (r,\theta)\in [1-\delta, 1+\delta]\times [\theta_0-\delta, \theta_0+\delta] \}
\]
is contained in $D(w_0, s)$. Then 
\[
g:R_\delta \to D(z_0, \rho), \ g(w)= \frac{1}{2\pi i} \underset{|z-z_0|=\rho}{\int} \frac{z \Phi'(z)}{\Phi (z)-w } dz
\]
defines a univalent holomorphic function satisfying that
\begin{equation} \label{eq:2}
\Phi \circ g = \mbox{identity on $R_\delta$,}
\end{equation}
see e.\ g.\ \cite[p.\ 283]{gamelin}.
So $W:= g(R_\delta)$ is a connected compact neighborhood of $z_0$, and $\Phi$ maps $W$ biholomorphically onto $R_\delta$. Hence for each $1-\delta \le r \le 1+\delta $
\[
\eta_r := g(R_\delta \cap r\partial \D)
\]
is a smooth arc contained in $W\cap \Phi^{-1}(r\partial\D)$. In particular, $\eta_1=W\cap\Gamma_1$ is a strictly convex subarc of $\Gamma_1$. 
Next, notice that since 
\[
W\cap \Omega \ \ \mbox{ and } \ \ W\cap \mbox{Ext}(\Omega ) 
\]
are the two connected components of $g(R_\delta\setminus \partial\D )= W\setminus \eta_1$
and $\Omega \subseteq \Phi^{-1}(\D)$, by $\eqref{eq:2}$  the homeomorphism $g:R_\delta\setminus \partial\D \to W\setminus \eta_1   $ must satisfy
\[
\begin{aligned}
g(R_\delta \cap \mbox{Ext}(\D)) &= W\cap \mbox{Ext}(\Omega) \\
g(R_\delta \cap \D)&= W\cap \Omega.
\end{aligned}
\]
Hence 
\[
W\cap \overline{\mbox{Ext}(\Omega)} = \underset{1\le r \le 1+\delta}{\cup} \eta_r
\]
and $g$ induces a smooth homotopy among the curves $\{ \eta_r \}_{1\le r\le 1+\delta}$. Namely,
each $\eta_r$ $(1\le r\le 1-\delta)$ has the Cartesian parametrization 
\[
\eta_r: \ \begin{cases}   X(r,t) \\
Y(r,t)
\end{cases}  
\ \ \ \theta_0-\delta \le t \le \theta_0 +\delta,
\]
where $X, Y:[1-\delta, 1+\delta]\times [\theta_0-\delta, \theta_0+\delta]\to \R$ are given by
\[
\begin{aligned}
X(r,t)&:= \Re(g)(r e^{it}) \\
Y(r,t)&:= \Im(g)(r e^{it}).
\end{aligned}
\]
Now, for any point $P=g(re^{i\theta})$ in $W$ the (signed) curvature $\kappa^{\eta_r}(P)$ of $\eta_r$ at $P$  is given by 
\[
\kappa^{\eta_r}(P) = \frac{        \frac{   \partial X }{\partial t      } (r,\theta)  \frac{   \partial^2 Y }{\partial^2 t      } (r,\theta) - \frac{   \partial Y }{\partial t      } (r,\theta)  \frac{   \partial^2 X }{\partial^2 t      } (r,\theta)  }{     \left( ( \frac{\partial X}{\partial t}(r,\theta))^2 +    ( \frac{\partial Y}{\partial t}(r, \theta))^2 \right)^{\frac{3}{2}} }.
\]
Hence the map $K : W\to \R$, $K (g(re^{it})) := \kappa^{\eta_r}(P)$, is continuous.  Now, since $\eta_1$ is strictly convex there exists some $P=g(e^{i\theta_1})$ in $\eta_1$ for which each of $\kappa^{\eta_1}(P)$ and $\frac{\partial X}{\partial t}(1,\theta_1)$ is non-zero. Hence by the continuity of $K$ and of $\frac{\partial X}{\partial t}$ we may find some $0<\delta'<\delta$ so that the polar rectangle 
\[
R_{\delta'} := \{ z=r e^{i\theta }: \ (r,\theta)\in [1-\delta', 1+\delta']\times [\theta_1-\delta', \theta_1+\delta'] \}
\]
is contained in the interior of $R_\delta$ and so that $K$  and $\frac{\partial X}{\partial t}$  are bounded away from zero on $g(R_{\delta'})$ and on $R_{\delta'}$, respectively. 

In particular, either  $\frac{\partial X}{\partial t}>0$ \ or  $\frac{\partial X}{\partial t}<0$ on $R_{\delta'}$, and 
either $K>0$ or $K<0$ on $g(R_{\delta'})$. So each $\eta_r\cap g(R_{\delta'})$ $(1\le r < 1+\delta')$ is the graph of a smooth function 
\[
f_r:(a_r, b_r)\to (0, \infty),
\]
with \[
(a_r, b_r)=\begin{cases} (X(r, \theta_1-\delta'), X(r, \theta_1+\delta')) &\mbox{ if $\frac{\partial X}{\partial t}>0$ on $R_{\delta'}$} \\
(X(r, \theta_1+\delta'), X(r, \theta_1-\delta')) &\mbox{ if $\frac{\partial X}{\partial t}<0$ on $R_{\delta'}$.} 
\end{cases}
\]
Since $g(re^{it})\underset{r\to 1}{\to} g(e^{it})$ uniformly on $t\in [\theta_1-\delta, \theta_1+\delta]$, 
 so \[(a_r, b_r)\underset{r\to1}{\to}(a_1, b_1)\] and fixing a non-trivial compact subinterval $[a,b]$ of $(a_1, b_1)$  there exists 
there exists $0<\delta''<\delta'$ so that
\[
[a,b]\subset \cap_{1\le r\le 1+\delta''} (a_r, b_r).
\]
So for each $1<r\le 1+\delta''$
\[
\eta_r'=\{ (x, f_r(x)); \ x\in [a,b] \}
\]
is a subarc of $\eta_r$. In particular, $f_1=f$ on $[a, b]$ must be a concave down function, and so must be each  $f_r$ with $1\le r\le 1+\delta''$. Thus choosing $r>1$ close enough to $1$ the arc $\Gamma:=\eta_r'$
satisfies
\[
\conv(\Gamma\cup\{ 0\})\setminus (\Gamma\cup\{ 0\}) \subset \Phi^{-1}(r\D )\cap \{ tz:  \ (t,z)\in (0,\infty)\times \Gamma_1 \}   
\]
and
\[
\sum_{k=1}^j \frac{1}{m}\Gamma \subset \Omega \ \ \mbox{for $j=1,\dots ,m-1$.}
\]
By the compactness of $\Gamma$ we may now get $\epsilon >0$ small enough so that the subsector 
\[
\Lambda := \Omega \cap D(0, \epsilon)\cap  \conv(\Gamma\cup\{ 0\})
\]
satisfies that
\[
\Lambda + \sum_{k=1}^j \frac{1}{m}\Gamma \subset \Omega  \ \ \mbox{for $j=1,\dots ,m-1$,}
\] and Lemma~\ref{L:1} holds.
\end{proof}

We are ready now to prove the main result.

\begin{proof}[Proof of Theorem~\ref{T:1}]

Let $U, V$ and $W$ be non-empty open subsets of $H(\C )$, with $0\in W$, and let $1\le m\in \N$ be fixed.
By Proposition~\ref{C:111}, it suffices to find some $f\in U$ and $q\in \N$ so that
\begin{equation}  \label{eq:1}
\begin{aligned}
\Phi(D)^q(f^j)&\in W \ \ \ \mbox{ for $j=1,\dots ,m-1$, }\\
\Phi(D)^q(f^m)&\in V.
\end{aligned}
\end{equation}
The case $m=1$ is immediate as $\Phi(D)$ is topologically transitive, so we may assume $1<m$.
Now, let $r>1$, let $\Gamma \subset \Phi^{-1} ( r \partial \D )$ and let $\Omega$ and the subsector $\Lambda$ be given by Lemma~\ref{L:1}. Since the arc $\Gamma$ is non-trivial and $\Lambda$ has non-empty interior, each of $\Gamma$ and $\Lambda$ has accumulation points in $\C$. Hence
there exist  $(a_k, b_k, \lambda_k, \gamma_k)\in \C\times\C\times \Lambda\times \Gamma$ $(k=1,\dots, p)$ so that
\[
(A, B):=\left( \sum_{k=1}^p a_k e^\frac{\lambda_k z}{m}, \sum_{k=1}^p b_k e^{\gamma_k z}\right) \in U\times V.
\]
 Next, set $R=R_q=\sum_{k=1}^p c_k e^\frac{\gamma_k z}{m}$ , where for each $1\le k \le p$ the scalar $c_k=c_k(q)$ is some solution of 
\[
z^m-\frac{b_k}{( \Phi(\gamma_k) )^q} =0.
\]
Notice that for any $k=1,\dots, p$ we have $|\Phi (\gamma_k)|=r>1$ and thus $|c_k|^m=  \frac{|b_k|}{|\Phi (\gamma_k )|^q } \underset{q\to\infty}{\to } 0$. So 
\begin{equation}\label{eq:a}
R=R_q\underset{q\to\infty}{\to } 0.
\end{equation}
For $1\le j\le m$ we have
\[
(A+R)^j = \sum_{\ell =( u, v)\in \mathcal{L}_j} {j \choose{\ell}} \  a^u \ c^v \ e^{(\frac{u\cdot \lambda + v \cdot \gamma}{m} ) z}
\]
where $\mathcal{L}_j$ conists of those multiindexes $\ell = (u, v)\in \N_0^p\times \N_0^p$ satisfying $\ |\ell|:=|u|+|v|=\sum_{k=1}^p u_k + \sum_{k=1}^p v_k = j$ and where for each $\ell = (u, v)\in \mathcal{L}_j$
\[
\begin{aligned}
a^u&:= a_1^{u_1} \ a_2^{u_2} \cdots a_p^{u_p}, \\
c^v&:= c_1^{v_1} \ c_2^{v_2} \cdots c_p^{v_p}, \mbox{ and }\\
{j\choose\ell}&=\frac{j!}{u_1!\cdots u_p! v_1! \cdots v_p!}.
\end{aligned}
\]
So for $1\le j \le m$ we have 
\[
\Phi(D)^q ((A+R)^j )= \sum_{\ell \in \mathcal{L}_j} U_{j, \ell},
\]
where 
\[ 
\begin{aligned}
U_{j, \ell} &= {j \choose{\ell}} \  a^u \ c^v \ 
\left( \Phi(\frac{u\cdot \lambda + v\cdot \gamma}{m}) \right)^q
e^{(\frac{u\cdot \lambda + v \cdot \gamma}{m} ) z}  \\
&=  {j \choose{\ell}} \  a^u \ b^{\frac{v}{m}} \ 
\left( 
\frac{
\Phi(\frac{u\cdot \lambda + v\cdot \gamma}{m})
 }{    \prod_{k=1}^p  \Phi(\gamma_k )^{\frac{v_k}{m}  } }
\right)^q
e^{(\frac{u\cdot \lambda + v \cdot \gamma}{m} ) z}. 
\end{aligned}
\] 

Now, notice that if $\{ e_1,\dots ,e_p \}$ denotes the standard basis of $\C^p$, our selections of $(c_1,\dots ,c_p)$ ensure that
\begin{equation} \label{eq:uno}
\Phi^q(D) ((A+R)^m)-B = \sum_{\ell\in \mathcal{L}_m^*} U_{m,\ell},
\end{equation}
where \[
\mathcal{L}_m^*=\{ \ell =(u, v) \in \mathcal{L}_m: \ |u|\ne 0 \mbox{ or } \ v\notin \{ me_1,\dots ,me_p \}  \}. \]
Also, for each $1\le j \le m$ and  $\ell=(u, v)\in \mathcal{L}_j$ with $|v|<m$ we have 
\[ 
U_{j, \ell}\underset{q\to\infty}{\to} 0,
\] 
as 
our selections of $\Lambda$ and $\Gamma$ give by $\eqref{eq:L1.2}$ that $\frac{u\cdot \lambda + v\cdot \gamma}{m}\in\Omega$ and thus
\[
\left|\Phi(\frac{u\cdot \lambda + v\cdot \gamma}{m})\right| < 1 < r=|\Phi (\gamma_1)|=\dots = |\Phi(\gamma_p)|.
\]
Hence since each $\mathcal{L}_j$ is finite we have
\begin{equation}\label{eq:b}
\Phi(D)^q ((A+R_q)^j) \underset{q\to\infty}{\to } 0 \ \ \ (1\le j < m).
\end{equation}
Finally, recall that by Lemma~\ref{L:1} we have 
\[
\conv(\Gamma_r) \setminus \Gamma_r \subseteq \Phi^{-1} (r \D).
\]
Hence if $\ell=(u, v)\in \mathcal{L}_m^*$ with $|v|=m$ (so $\| v \|_\infty <m$ and $u=0$) we also have that $U_{m, \ell}\underset{q\to\infty}{\to} 0$, as
\[
\left|\Phi (\frac{u\cdot \lambda + v\cdot \gamma}{m} )\right| =\left | \Phi (\frac{ v\cdot \gamma}{m} )\right| < r = |\Phi(\gamma_1)|^\frac{v_1}{m} \dots |\Phi(\gamma_p )|^\frac{v_p}{m}.
\]
Thus
\[
\Phi^q(D) ((A+R_q)^m)\underset{q\to\infty}{\to} B,
\]
and $\eqref{eq:1}$ follows  by $\eqref{eq:a}$ and $\eqref{eq:b}$.
\end{proof}

\subsection{Some consequences of Theorem~\ref{T:1}}

Theorem~\ref{T:1} complements  \cite[Thm.~1]{bes_conejero_papathanasiou2016convolution} and gives an alternative proof to those of Shkarin \cite[Thm.\ 4.1]{shkarin2010on} and Bayart and Matheron
\cite[Thm.\ 8.26]{bayart_matheron2009dynamics} that $D$ supports a hypercyclic algebra.

\begin{corollary} \label{C:2}
Let $P(z)=(a_0+a_1 z^k)^n$ with $|a_0|\le 1$, $a_1\ne 0$, and $k,n\in\N$. Then $P(D)$ supports a hypercyclic algebra on $H(\C)$.
\end{corollary}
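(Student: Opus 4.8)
The plan is to deduce the corollary from Theorem~\ref{T:1} after two reductions. First, by Lemma~\ref{L:0} the operator $P(D)$ supports a hypercyclic algebra if and only if $\Phi(D)$ does, where $\Phi(z):=P(az)=(a_0+a_1a^kz^k)^n$ for a homothety $\varphi(z)=az$; choosing $a\in\C$ with $a_1a^k=1$ we may assume $\Phi(z)=(a_0+z^k)^n$, which is a polynomial and hence of finite exponential type. Second, since $|\Phi(z)|=|a_0+z^k|^n$, the level set $\{|\Phi|=1\}$ and the sublevel set $\Phi^{-1}(\D)$ coincide with those of $\phi(z):=a_0+z^k$; thus the exponent $n$ is irrelevant, and it suffices to produce in $\{|a_0+z^k|=1\}$ a non-trivial, strictly convex, compact arc $\Gamma_1$ with $\conv(\Gamma_1\cup\{0\})\setminus(\Gamma_1\cup\{0\})\subseteq\phi^{-1}(\D)$, i.e.\ the hull condition \eqref{eq:T1.1} for $\phi$. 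The case $a_0=0$ is immediate, since then $\phi^{-1}(\overline{\D})$ is the closed unit disc, so I assume $a_0\neq0$.

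Two geometric facts about $\phi$ drive the construction. First, I would show that the closed sublevel set $K:=\{z:\ |a_0+z^k|\le 1\}$ is star-shaped with respect to the origin: along a ray $z=\rho e^{i\theta}$ one has $|a_0+z^k|^2=g(\rho^k)$ with $g(s)=s^2+2s\,\Re(\overline{a_0}e^{ik\theta})+|a_0|^2$ an upward parabola satisfying $g(0)=|a_0|^2\le 1$, whence $\{\rho\ge 0:\ \rho e^{i\theta}\in K\}$ is an interval $[0,\rho_1(\theta)]$. Second, I would place the arc over the point $u^\ast=-a_0(1+1/|a_0|)$ of the closed disc $\overline B:=\{u:\ |u+a_0|\le1\}$ farthest from the origin, taking $z^\ast$ to be any $k$-th root of $u^\ast$; this is nonzero since $|u^\ast|=1+|a_0|>0$. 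Near $z^\ast$ the map $z\mapsto z^k$ is a conformal isomorphism with local inverse $\psi(u)=u^{1/k}$, and a curvature computation for the image under $\psi$ of the circle $\partial B$ yields signed curvature $\tilde\kappa=\frac{1}{|\psi'|}\bigl(1+(\tfrac1k-1)\tfrac{1}{1+|a_0|}\bigr)>0$ at $z^\ast$. Hence the level curve is strictly convex there and $K$ is locally convex at $z^\ast$. I then take $\Gamma_1$ to be a short closed subarc of the level curve centered at $z^\ast$; it is non-trivial, strictly convex, and, being near a point at distance $(1+|a_0|)^{1/k}\ge1$ from the origin, satisfies $0\notin\conv(\Gamma_1)$.

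It remains to verify the hull condition. Writing $\conv(\Gamma_1\cup\{0\})=\bigcup_{q\in\conv(\Gamma_1)}[0,q]$, local convexity of $K$ at $z^\ast$ gives $\conv(\Gamma_1)\subseteq K$ (a short chord stays in the convex local sublevel set), and then star-shapedness gives $[0,q]\subseteq K$ for each such $q$, so $|\phi|\le 1$ on all of $\conv(\Gamma_1\cup\{0\})$. For strictness, since $\phi$ is a nonconstant polynomial and $\conv(\Gamma_1\cup\{0\})$ is compact with nonempty interior, the maximum modulus principle forces $|\phi|<1$ in the interior; and on the two open radial edges $(0,z_1),(0,z_2)$, where $z_1,z_2$ are the endpoints of $\Gamma_1$ and the boundary of $\conv(\Gamma_1\cup\{0\})$ consists of $\Gamma_1,[0,z_1],[0,z_2]$, I would use $a_0+(tz_j)^k=t^k(a_0+z_j^k)+(1-t^k)a_0$ together with the triangle inequality, which is strict because $a_0+z_j^k$ and $a_0$ fail to be positively proportional unit vectors unless $z_j=0$, excluded. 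This gives $|\phi|<1$ throughout $\Omega:=\conv(\Gamma_1\cup\{0\})\setminus(\Gamma_1\cup\{0\})$, so Theorem~\ref{T:1} applies to $\Phi$ and, by Lemma~\ref{L:0}, to $P(D)$.

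The main obstacle is the second step: showing that the level curve $\{|a_0+z^k|=1\}$ really admits a strictly convex arc bulging toward the sublevel set. Star-shapedness alone does not force any boundary point to be a point of local convexity (a star-shaped region may have entirely concave boundary), so the curvature computation at the farthest point $z^\ast$—where the outward bulge is maximal—is essential, and one must track orientation and sign conventions carefully so that $\tilde\kappa>0$ is read as local convexity of $K$ itself rather than of its complement.
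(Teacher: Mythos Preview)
Your argument is correct and in fact does more than the paper's own proof, which occupies three lines: the paper simply asserts that $Q_1(z)=a_0+z^k$ satisfies the hypotheses of Theorem~\ref{T:1}, invokes Remark~\ref{R:2}(2) to pass to $Q_2(z)=a_0+a_1z^k$, and then appeals to Ansari's theorem $HC(T)=HC(T^n)$ to handle the outer power~$n$. Your route differs in two respects. First, you bypass Ansari entirely by the clean observation that $\{|\Phi|=1\}=\{|\phi|=1\}$ and $\Phi^{-1}(\D)=\phi^{-1}(\D)$ when $\Phi=\phi^n$, so the very same arc $\Gamma_1$ verifies \eqref{eq:T1.1} for $\Phi$ directly; this is a genuine simplification, keeping the proof self-contained within the paper. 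Second, you actually supply the geometric verification that the paper leaves implicit: star-shapedness of $\{|a_0+z^k|\le 1\}$ via the quadratic $g(s)$, and strict convexity of the boundary near the farthest preimage point via the conformal curvature formula $\tilde\kappa=|\psi'|^{-1}\bigl(1+(\tfrac1k-1)\tfrac{1}{1+|a_0|}\bigr)$, which is indeed positive since $(\tfrac1k-1)\tfrac{1}{1+|a_0|}\ge \tfrac1k-1>-1$. Two small cosmetic points: positivity of $\tilde\kappa$ at the single point $z^\ast$ should be extended to a neighborhood by continuity before concluding strict convexity of the subarc, and in your strictness argument on the radial edges the phrase ``positively proportional unit vectors'' should read ``positively proportional'' (the case $|a_0|<1$ is handled by the second inequality, the case $|a_0|=1$ by the first since then equality would force $z_j^k=0$).
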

\begin{proof}
Notice first that $Q_1(z)=a_0+z^k$ satisfies the assumptions of Theorem~\ref{T:1}, and hence so does $Q_2(z)=a_0+a_1 z^k$, by Remark~\ref{R:2}. The conclusion now follows by a result due to Ansari \cite{Ansari} that the set of hypercyclic vectors for an operator $T$ coincides with the corresponding set of hypercyclic vectors for any given iterate $T^n$ $(n\in\N)$.
\end{proof}

We may also apply Theorem~\ref{T:1} to convolution operators that are not induced by polynomials. 
\begin{example} \label{E:cos(z)}
The operators $\cos(aD)$  and $\sin(aD)$ support a hypercyclic algebra on $H(\C )$ if $a\ne 0$. To see this, notice first that by Lemma~\ref{L:0} we may assume that $a=1$. For the first example, notice that $\Phi(z)=\cos(z)$ is of exponential type and
\[
 |\Phi(z)|^2=|\cos(z)|^2=\cos^2(x)+\sinh^2(y)  \ \ (z=x+iy,  x,y\in\R).
\]
So $\Gamma =\{ (x, f(x)): \ 0\le x \le \pi  \} \subset \Phi^{-1}(\partial\D)$ for the smooth function $f:[0, \pi]\to [0, \infty), \ f(x)=\sinh^{-1}(\sin(x))$, which is concave down since its second derivative $f''(x)=\frac{-2\sin(x)}{(1+\sin^2(x))^\frac{3}{2}}$ is negative on $(0,\pi)$. Now 
\[
\conv(\Gamma\cup\{ 0\})\setminus (\Gamma\cup\{ 0\})
\]
 is the region bounded by the graph of $f$ and the $x$-axis, on which $|\Phi|<1$, and $\cos(D)$ supports a hypercyclic algebra by Theorem~\ref{T:1}. The proof for $\sin(D)$ follows similarly by considering instead the subarc \[ \Gamma_0 := \left\{ \left(x-\frac{\pi}{2}, \sinh^{-1}(\sin(x))\right): \ 0\le x\le \pi \right\}\] of $\{ z\in\mathbb{C}: \ |\sin(z)|=1 \}$.
\end{example}

The next two examples should be contrasted with \cite[Corollary 2.4]{aron_conejero_peris_seoane-sepulveda2007powers}.
\begin{example}  \label{E:ze^z}
The operator $T=D\tau_1=De^D$ on $H(\C)$, where $\tau_1$ is the translation operator $g(z)\mapsto g(z+1), g\in H(\C)$ supports a hypercyclic algebra. 

Let $\Phi(z)=ze^z$. Clearly $\Phi$ is of exponential type, so we may check the conditions of Theorem~\ref{T:1}. Writing $z=x+iy$ we get
\begin{equation}
|f(z)|=1 \Leftrightarrow y^2=e^{-2x}-x^2
\end{equation}
The above equation has solutions provided the function  $\phi(x)=e^{-2x}-x^2$ satisfies that $\phi(x)\geq 0$. By doing some elementary calculus, one shows that $\phi$ is strictly decreasing on $\mathbb{R}$ and has a unique solution say $r \in (0,1)$. Thus the graph of the function
$$
h(x)=\sqrt{e^{-2x}-x^2},\quad x\in (-\infty ,r]
$$
lies in $f^{-1}(\partial \mathbb{D})$. Taking derivatives, we get that $h'<0$ and $h''<0$ on $(0,r)$, thus $h$ is strictly decreasing and concave down on $[0,r]$. Furthermore, it is evident that the sector 
$$
S=\{z=x+iy \in \C : 0\leq x<r, 0\leq y<h(x)\}
$$ 
lies in $f^{-1}(\mathbb{D})$. Thus, the strictly convex arc 
$$
\Gamma _1=\{z=x+iy \in \C : 0\leq x \leq r, y=h(x)\}$$ 
satisfies the conditions of Theorem~\ref{T:1}, which guarantees the existence of a hypercyclic algebra for the operator $f(D)$.
\end{example}

\begin{example} \label{E:e^z-a}
For each $0<a\leq 1$, the operator $T=\tau_1-aI=e^D-aI$ supports a hypercyclic algebra. To see this, we will show that the exponential type function $\Phi(z)=e^z-a$ satisfies the assumptions of Theorem~\ref{T:1}. If $z=x+iy$ then an easy calculation shows that
\begin{equation}\label{ex}
|\Phi(z)|\leq 1 \Leftrightarrow e^{2x}-2a\cos(y)e^x+a^2-1\leq 0.
\end{equation}
If we restrict $y\in [0,\frac{\pi}{4}]$ the inequality~$\eqref{ex}$ has solution $x\leq \log(a\cos(y)+\sqrt{1-a^2\sin^2(y)})$. Hence, setting 
$$
\Gamma _1=\left\{z=x+iy \in \C: 0\leq y\leq \frac{\pi}{4}, x=\log(a\cos(y)+\sqrt{1-a^2\sin^2(y)})\right\}
$$
we get that $\Gamma _1\subset \Phi^{-1}(\partial \D)$, and that 
$$
\{z=x+iy \in \C: 0\leq y\leq \frac{\pi}{4}, x< \log(a\cos{y}+\sqrt{1-a^2\sin^2(y)})\} \subset \Phi^{-1}(\D).
$$
Moreover, since $0<a\leq 1$ and $0\leq y \leq \frac{\pi}{4}$, it follows that 
$$
x=\log\left(a\cos(y)+\sqrt{1-a^2\sin^2(y)}\right)>0
$$
and that
\begin{align*}
&\frac{dx}{dy}=-\frac{a\sin(y)}{\sqrt{1-a^2\sin^2(y)}}<0,\\
&\frac{d^2x}{dy^2}=-\frac{a\cos(y)}{(1-a^2\sin^2(y))^{3/2}}<0.
\end{align*}
Hence, the function $x=\log\left(a\cos(y)+\sqrt{1-a^2\sin^2(y)}\right), y\in [0,\frac{\pi}{4}]$ is positive, decreasing and concave down. It follows that $\conv(\Gamma _1)\setminus \Gamma _1 \subset \Phi^{-1}(\D)$, and hence by Theorem~\ref{T:1} that $\Phi(D)$ has a hypercyclic algebra as claimed.
\end{example}

The following observation by Godefroy and Shapiro allows to conclude the existence of hypercyclic algebras for differentiation operators on $C^\infty (\R , \C)$. 
\begin{remark} (Godefroy and Shapiro)\label{R:GoS}
The restriction operator
\[
\begin{aligned}
\mathcal{R}:&H(\C)\to C^\infty(\R, \C) \\
f&=f(z)\mapsto f(x)
\end{aligned}
\]
is continuous, of dense range, and multiplicative, and for any complex polynomial $P=P(z)$ 
we have
\[
\mathcal{R} P\left(\frac{d}{dz}\right)  = P\left(\frac{d}{dx}\right) \mathcal{R} 
\]
\end{remark}
By Theorem~\ref{T:1},  Remark~\ref{R:2}, Remark~\ref{R:GoS} and \cite[Thm.~1]{bes_conejero_papathanasiou2016convolution}  we have: 
\begin{corollary} \label{C:10}
Let $P \in H(\C )$ be either a non-constant polynomial vanishing at zero or so that the level set $\{ z\in\mathbb{C}: \ |\Phi(z)|=1 \}$ contains a non-trivial, strictly convex compact arc $\Gamma_1$ satisfying
\[
\conv(\Gamma_1\cup \{ 0 \} ) \setminus ( \Gamma_1\cup \{ 0 \} ) \subseteq P^{-1} (\mathbb{D}).
\]
Then the operator $P(\frac{d}{dx})$ supports a hypercyclic algebra on $C^\infty (\R , \C )$. 
In particular, $T=aI+b\frac{d}{dx}$ supports a hypercyclic algebra on $C^\infty (\R , \C )$ whenever $|a|\le 1$ and $0\ne b$.
\end{corollary}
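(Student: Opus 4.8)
The plan is to reduce everything to the already established existence of a hypercyclic algebra for the convolution operator $P(\frac{d}{dz})$ on $H(\C)$, and then to push this algebra forward to $C^\infty(\R,\C)$ through the restriction operator $\mathcal{R}$ of Remark~\ref{R:GoS}. First I would record that $P(\frac{d}{dz})$ supports a hypercyclic algebra on $H(\C)$: when $P$ is a non-constant polynomial with $P(0)=0$ this is exactly Theorem~\ref{T:0} (that is, \cite[Thm.~1]{bes_conejero_papathanasiou2016convolution}), and when the level set $\{|P|=1\}$ contains a non-trivial strictly convex compact arc $\Gamma_1$ with $\conv(\Gamma_1\cup\{0\})\setminus(\Gamma_1\cup\{0\})\subseteq P^{-1}(\D)$ this is Theorem~\ref{T:1}. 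Fix a hypercyclic algebra $A\subseteq H(\C)$ for $P(\frac{d}{dz})$.

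Next I would apply the Comparison Principle for hypercyclic algebras from Remark~\ref{R:2}(1). By Remark~\ref{R:GoS} the map $\mathcal{R}:H(\C)\to C^\infty(\R,\C)$ is continuous, multiplicative, of dense range, and satisfies the intertwining $\mathcal{R}\,P(\frac{d}{dz}) = P(\frac{d}{dx})\,\mathcal{R}$. Thus $P(\frac{d}{dx})$ is quasi-conjugate to $P(\frac{d}{dz})$ via the multiplicative operator $\mathcal{R}$, and the Comparison Principle yields that $\mathcal{R}(A)=\{\mathcal{R}f : f\in A\}$ is a hypercyclic algebra for $P(\frac{d}{dx})$ on $C^\infty(\R,\C)$. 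This proves the general statement.

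For the final assertion I would specialize to the symbol $P(z)=a+bz$ of $T=aI+b\frac{d}{dx}$, with $|a|\le 1$ and $b\neq 0$. This $P$ falls under the geometric hypothesis: indeed, it is precisely the case $k=n=1$ of Corollary~\ref{C:2}, whose proof shows that $Q_1(z)=a+z$ meets the assumptions of Theorem~\ref{T:1} (its level set $\{|a+z|=1\}$ is a circle, hence strictly convex, and the condition $|a|\le 1$ positions the origin so that the bounded region of the relevant convex hull is sent into $\D$) and that these assumptions are preserved under the homothety carrying $a+z$ to $a+bz$ via Remark~\ref{R:2}(2). Hence $P(\frac{d}{dz})$ supports a hypercyclic algebra on $H(\C)$, and the transfer above delivers a hypercyclic algebra for $P(\frac{d}{dx})=aI+b\frac{d}{dx}$ on $C^\infty(\R,\C)$.

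The step that demands the most care is the interface between the two spaces in the second paragraph: one must know that $P(\frac{d}{dx})$ is a genuine continuous operator on all of $C^\infty(\R,\C)$ and that the intertwining actually holds. For polynomial symbols---in particular for the linear symbol $a+bz$ of the final assertion---both facts are immediate from Remark~\ref{R:GoS}, so the Comparison Principle applies verbatim. For a non-polynomial symbol of exponential type satisfying only the geometric hypothesis, one would first have to make sense of $P(\frac{d}{dx})$ as an operator on $C^\infty(\R,\C)$ before invoking the intertwining, which in any case is valid on the dense range $\mathcal{R}(H(\C))$ consisting of restrictions of entire functions; this is the one place where the transfer argument should be checked rather than merely quoted.
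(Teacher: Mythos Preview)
Your proposal is correct and follows exactly the route the paper intends: it simply cites Theorem~\ref{T:1} (or \cite[Thm.~1]{bes_conejero_papathanasiou2016convolution} for the polynomial case), Remark~\ref{R:GoS}, and the Comparison Principle of Remark~\ref{R:2}(1) without giving any further argument. Your closing caveat about non-polynomial symbols is apt---Remark~\ref{R:GoS} states the intertwining only for polynomial $P$, so the transfer is fully justified precisely in the polynomial case (which covers the ``in particular'' clause), and the paper does not elaborate on this point either.
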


%

\begin{remark}
The geometric assumption $\eqref{eq:T1.1}$  in
Theorem~\ref{T:1} does not seem to be a necessary one, as the following example by F\'elix Mart\'{\i}nez suggests.
The polynomial  $P(z):= \frac{9^{9/8}}{8}z(z^8-1)$ vanishes at zero, so $P(D)$ supports a hypercyclic algebra by \cite[Thm.~1]{bes_conejero_papathanasiou2016convolution}. On the other hand, numerical evidence suggests that the level set $P^{-1}(\partial\D)$ 
 does not contain any non-trivial strictly convex compact arc $\Gamma$ 
so that $\conv(\Gamma \cup\{ 0 \}) \setminus (\Gamma \cup \{ 0 \}) \subset P^{-1} (\D)$, see 
Figure~\ref{fig:rose}. 

\begin{figure}[ht] 
\centering
\includegraphics[width=60mm]{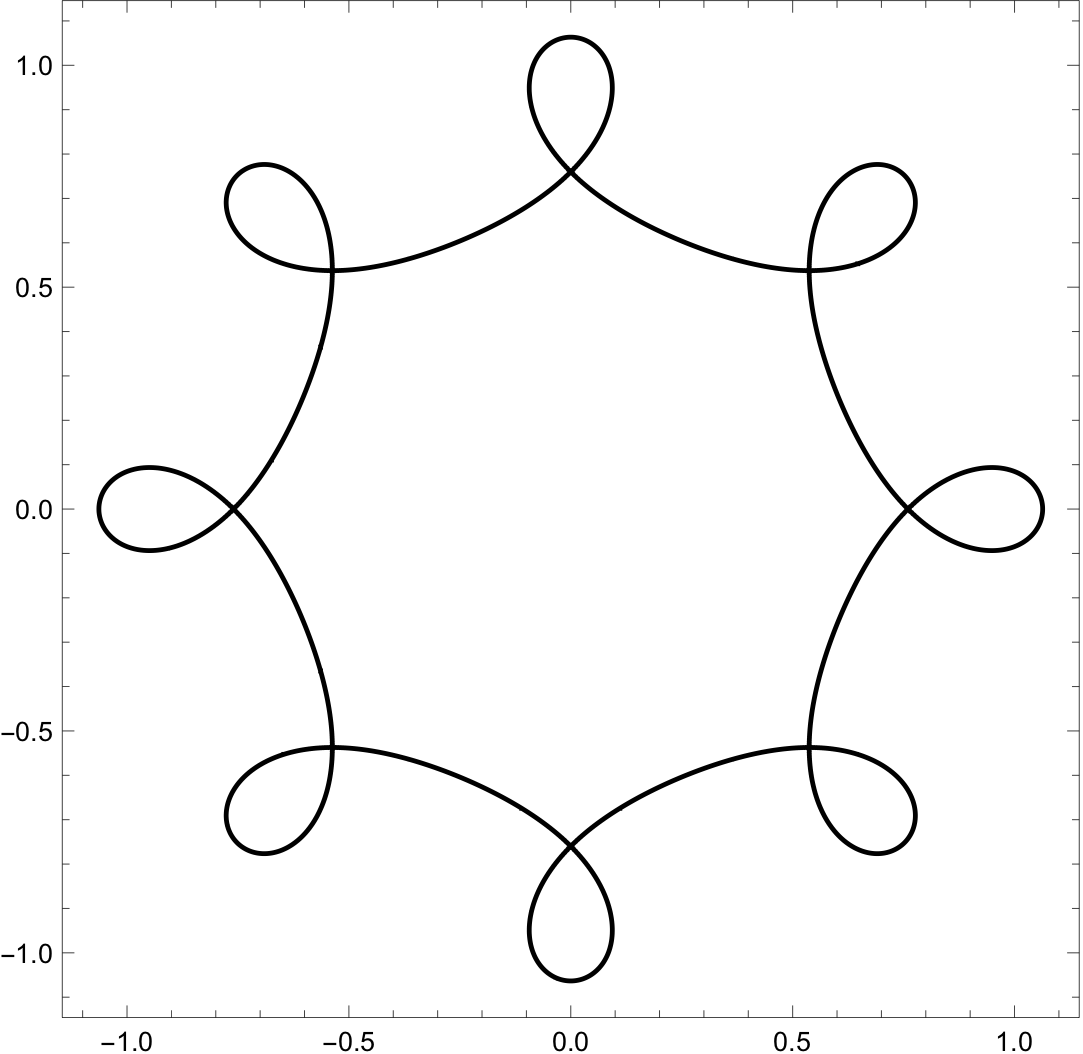}
\caption{The level set $P^{-1}(\partial\D)$  for $P(z):= \frac{9^{9/8}}{8}z(z^8-1)$.
}
\label{fig:rose}
\end{figure}
\end{remark}

We conclude the section by raising the following question, see Theorem~\ref{T:trio}.

\begin{question}\label{Pr:1}
Let $\Phi(D): H(\C)\to H(\C)$ be a hypercyclic convolution operator {\em not} supporting a hypercyclic algebra. 
\begin{enumerate}
\item[(i)]\ (Aron) Can $\Phi$ be a polynomial?
\item[(ii)]\ Must $\Phi\in H(\C )$ be of the form $\Phi(z)=b e^{a z}$, for some non-zero scalars $a$ and $b$? 
\end{enumerate}
\end{question}

\section{Hypercyclic algebras and composition operators} \label{S:CompositionOperators}

\subsection{Translations on the algebra of smooth functions}
As observed in Theorem~\ref{T:-1} by Aron et al \cite{aron_conejero_peris_seoane-sepulveda2007powers} no translation operator $\tau_a(f)(\cdot )=f(\cdot +a)$ supports a hypercyclic algebra on $H(\C )$. We show in Corollary~\ref{C:HcAsmoothtranslationsC} below that in contrast each non-trivial translation $\tau_a$ supports a hypercyclic algebra on $C^\infty(\R,\C)$. The proof is based on the following general fact.

\begin{theorem} \label{P:duo}
Let $T$ be a hypercyclic multiplicative operator on a separable $F$-algebra $X$ over the real or complex scalar field $\mathbb{K}$. The following are equivalent:
\begin{enumerate}
\item[{\rm ($a$)}]\ The operator $T$ supports a hypercyclic algebra.

\item[{\rm ($b$)}]\ For each non-constant polynomial $P\in \mathbb{K}[t]$ with $P(0)=0$, the map $\widehat{P} : X\to X$, \ $f\mapsto P(f)$, has dense range.

\item[{\rm($c$)}]\ Each hypercyclic vector for $T$ generates a hypercyclic algebra.
\end{enumerate}
\end{theorem}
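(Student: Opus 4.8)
The plan is to prove the cycle $(c)\Rightarrow(a)\Rightarrow(b)\Rightarrow(c)$, with the whole argument resting on a single algebraic identity. Since $T$ is linear and multiplicative, for any polynomial $P(t)=\sum_{k\ge 1}a_k t^k$ with $P(0)=0$ and any $f\in X$ one has
\[
T^n\bigl(\widehat{P}(f)\bigr)=\sum_{k\ge 1}a_k\,T^n(f^k)=\sum_{k\ge 1}a_k\,(T^nf)^k=\widehat{P}(T^nf)\qquad(n\ge 0).
\]
The hypothesis $P(0)=0$ is exactly what lets us dispense with any unit and any constant term here. In words, the orbit of $\widehat{P}(f)$ is the image under $\widehat{P}$ of the orbit of $f$: \ $\{T^n\widehat{P}(f):n\ge 0\}=\widehat{P}(\{T^nf:n\ge 0\})$. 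Note also that $\widehat{P}$ is continuous, since multiplication in an $F$-algebra is continuous.

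The implication $(c)\Rightarrow(a)$ is immediate: as $T$ is hypercyclic it has a hypercyclic vector $f$, which by $(c)$ generates a hypercyclic algebra, so $T$ supports one. For $(b)\Rightarrow(c)$, I would fix an arbitrary hypercyclic vector $f$; the subalgebra it generates is $\{\widehat{P}(f):P\in\K[t],\ P(0)=0\}$, so a typical element is $\widehat{P}(f)$ with $P$ non-constant and $P(0)=0$. By the displayed identity its orbit equals $\widehat{P}(\{T^nf:n\ge 0\})$, where $\{T^nf:n\ge 0\}$ is dense (as $f$ is hypercyclic), $\widehat{P}$ is continuous, and by $(b)$ it has dense range. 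The elementary fact that a continuous map of dense range carries dense sets to dense sets — here $X=\overline{\widehat{P}(X)}=\overline{\widehat{P}(\overline{\{T^nf\}})}\subseteq\overline{\widehat{P}(\{T^nf\})}$ — then shows this orbit is dense, so $\widehat{P}(f)$ is hypercyclic (and in particular nonzero). Hence every nonzero element of the generated algebra is hypercyclic, which is $(c)$.

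The crux is $(a)\Rightarrow(b)$. Let $A$ be a hypercyclic algebra and fix a nonzero $f\in A$, so $f$ is hypercyclic; for a non-constant $P$ with $P(0)=0$ we have $\widehat{P}(f)\in A$. If $\widehat{P}(f)\ne 0$, then $\widehat{P}(f)$ is hypercyclic, so its orbit $\widehat{P}(\{T^nf:n\ge 0\})$ is dense and contained in $\Range(\widehat{P})$, giving $(b)$. The one thing to rule out is $\widehat{P}(f)=0$. But $\widehat{P}(f)=0$ forces, via the displayed identity, $\widehat{P}(T^nf)=T^n\widehat{P}(f)=0$ for all $n$, so the continuous map $\widehat{P}$ vanishes on the dense orbit of $f$ and hence on all of $X$. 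This is impossible for a non-constant $P$: evaluating on scalar multiples of the unit gives $\widehat{P}(\lambda\cdot 1)=P(\lambda)\cdot 1$, which is nonzero for some $\lambda$ since $P$ has only finitely many roots in the infinite field $\K$. Thus $\widehat{P}(f)\ne 0$ and $(b)$ follows.

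The main obstacle is precisely this last degeneracy check in $(a)\Rightarrow(b)$: one must know that a hypercyclic vector cannot satisfy a nontrivial polynomial relation $\widehat{P}(f)=0$, equivalently that $\widehat{P}$ is not identically zero. For the (unital) function algebras of interest this is transparent, as indicated above; in full generality it is exactly the point where the multiplicative structure of $X$ enters, since an algebra with degenerate multiplication (for instance one with $g^2=0$ for every $g$) would make $\widehat{P}\equiv 0$ for $P(t)=t^2$ and thereby break the equivalence. Everything else is a formal consequence of the identity $T^n\widehat{P}(f)=\widehat{P}(T^nf)$ together with the continuity and dense range of $\widehat{P}$.
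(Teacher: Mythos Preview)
Your proof follows the same cycle $(c)\Rightarrow(a)\Rightarrow(b)\Rightarrow(c)$ and rests on the same key identity $T^n(P(f))=P(T^nf)$ as the paper; your arguments for $(c)\Rightarrow(a)$ and $(b)\Rightarrow(c)$ are essentially identical to the paper's (the paper phrases the latter via ``$\widehat P^{-1}(V)$ is open and nonempty'', which is exactly your ``a continuous map of dense range carries dense sets to dense sets'').

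Where you go beyond the paper is in $(a)\Rightarrow(b)$. The paper simply takes $g$ generating a hypercyclic algebra and asserts that $\widehat P(\mathrm{Orb}(T,g))=\mathrm{Orb}(T,P(g))$ is dense, never pausing to check that $P(g)\ne 0$. You correctly isolate this as the only nontrivial point and supply an argument---but your argument invokes a unit $1\in X$, which is not among the stated hypotheses. Your closing caveat is therefore exactly right, and in fact sharper than what the paper records: equip any separable $F$-space carrying a hypercyclic operator with the zero multiplication $xy\equiv 0$; then every operator is multiplicative, any hypercyclic vector spans a one-dimensional hypercyclic algebra (so $(a)$ holds), yet $\widehat P\equiv 0$ for $P(t)=t^2$ (so $(b)$ fails). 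Hence the equivalence as stated needs a nondegeneracy hypothesis such as unitality; the paper's proof tacitly assumes it, while you make the issue explicit.
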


\begin{proof}
The implication $(c)\Rightarrow (a)$ is immediate. To see $(a)\Rightarrow (b)$, let $g\in X$  generating a hypercyclic algebra $A(g)$ for $T$. In particular, for each polynomial $P$ vanishing at the origin the multiplicativity of $T$ gives that  $\widehat{P}(\mbox{Orb}(T, g))= \mbox{Orb}(T, P(g))$ is dense in $X$. Finally, to show the implication $(b)\Rightarrow (c)$, let $f\in X$ be hypercyclic for $T$ and let   $P\in \mathbb{K}[t]$ be a non-constant polynomial with $P(0)=0$. Given $V\subset X$ open and non-empty, 
by our assumption the set $\widehat{P}^{-1}(V)$ is open and non-empty. So there exists $n\in\N$ for which $T^n(f)\in \widehat{P}^{-1}(V)$.  The multiplicativity of the operator $T$ now gives
\[
T^n(P(f))=\widehat{P}(T^n(f))\in V.
\]
So $P(f)$ is hypercyclic for $T$ for each non-constant polynomial $P$ that vanishes at zero.
\end{proof}

\begin{corollary}\label{C:T_aR}
For each $0\ne a\in \R$ the translation operator \[ T_a: C^\infty(\R,\R)\to C^\infty(\R,\R), \ T_a(f)(x)=f(x+a), \ x\in\R,\] is weakly mixing  but does not support a hypercyclic algebra.
\end{corollary}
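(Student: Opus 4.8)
The plan is to treat the two assertions separately, securing hypercyclicity (indeed weak mixing) first so that Theorem~\ref{P:duo} becomes applicable, and then invoking that theorem to rule out a hypercyclic algebra. Throughout I would note that $C^\infty(\R,\R)$ is a separable $F$-algebra under pointwise multiplication and that $T_a$, being a composition operator, is multiplicative, so the hypotheses of Theorem~\ref{P:duo} are met.

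For the weak mixing I would verify the Hypercyclicity Criterion, whose fulfilment implies weak mixing. As a dense subset of $C^\infty(\R,\R)$ I would take the space $C_c^\infty(\R,\R)$ of compactly supported smooth functions; its density follows by multiplying a given $f$ by a smooth cutoff equal to $1$ on $[-(k+1),k+1]$, which leaves the seminorm $P_k$ unchanged. Since $T_a$ is invertible with inverse $T_{-a}$, I would set $S_n:=T_{-a}^n$, so that $T_a^n S_n=\mathrm{id}$ exactly. For any $f\in C_c^\infty(\R,\R)$ with support in $[-M,M]$ the functions $T_a^n f(x)=f(x+na)$ and $T_{-a}^n f(x)=f(x-na)$ have support escaping to $\mp\infty$, so for each fixed $k$ one has $P_k(T_a^n f)=P_k(T_{-a}^n f)=0$ as soon as $n|a|>M+k$; hence $T_a^n f\to 0$ and $S_n f\to 0$ on the dense set. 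This verifies the criterion and yields weak mixing; in particular $T_a$ is hypercyclic.

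For the second assertion I would use the equivalence $(a)\Leftrightarrow(b)$ of Theorem~\ref{P:duo}: it suffices to exhibit a single non-constant polynomial $P\in\R[t]$ with $P(0)=0$ whose associated map $\widehat P\colon f\mapsto P(f)$ fails to have dense range. The natural choice is $P(t)=t^2$, for which $\Range(\widehat P)$ lies in the cone of pointwise-nonnegative functions. This cone is stable under the relevant limits: if $f_n^2\to g$ in $C^\infty(\R,\R)$, then evaluating via the zeroth-order part of each seminorm $P_k$ forces $g\ge 0$ on $[-k,k]$ for every $k$, hence $g\ge 0$ on $\R$. Consequently the constant function $-1$ lies outside the closure of $\Range(\widehat P)$, so condition $(b)$ fails and $T_a$ supports no hypercyclic algebra.

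I do not anticipate a genuine difficulty in either half; the only steps requiring slight care are the density of $C_c^\infty(\R,\R)$ and the elementary seminorm estimates, both routine. The essential point—and the source of the contrast with $C^\infty(\R,\C)$ in Corollary~\ref{C:HcAsmoothtranslationsC}—is precisely that over $\R$ the squaring map cannot attain negative values, whereas over $\C$ one may take local smooth roots and $\widehat P$ does have dense range. Thus the obstruction here is neither dynamical nor topological but algebraic, carried entirely by the real scalar field, and Theorem~\ref{P:duo} isolates it exactly.
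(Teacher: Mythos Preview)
Your proposal is correct. The second half---ruling out a hypercyclic algebra via $P(t)=t^2$ and the nonnegativity of squares---is exactly the paper's argument, which simply notes that $\{f^2:f\in C^\infty(\R,\R)\}$ is not dense and appeals (implicitly) to the implication $(a)\Rightarrow(b)$ of Theorem~\ref{P:duo}.

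The weak mixing half differs in method. You verify the Hypercyclicity Criterion directly on $C_c^\infty(\R,\R)$, which is self-contained and entirely routine. The paper instead observes that the $\R$-linear homeomorphism $J:C^\infty(\R,\C)\to C^\infty(\R,\R)\times C^\infty(\R,\R)$, $J(f)=(\Re f,\Im f)$, conjugates $\tau_a$ to $T_a\oplus T_a$; since $\tau_a$ is hypercyclic on $C^\infty(\R,\C)$, so is $T_a\oplus T_a$, and this is precisely weak mixing of $T_a$. The paper's route is a one-line transfer from the complex case but relies on hypercyclicity there being already in hand; yours requires no such external input and makes the argument independent of Corollary~\ref{C:HcAsmoothtranslationsC}. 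Either way the content is elementary.
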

\begin{proof}
Notice first that $J\tau_a = (T_a\oplus T_a)J$ for the $\R$-linear homeomorphism $J:C^\infty(\R, \C)\to C^\infty(\R,\R)\times C^\infty(\R,\R)$, $J(f)=(\Re(f), \Im(f))$. So $T_a$ is weakly mixing. But the multiplicative operator $T_a$ does not support a hypercyclic algebra, since $\{ f^2: \ f\in C^\infty(\R,\R) \}$ is not dense in $C^\infty(\R, \R)$.
\end{proof}

The next two lemmas are used to establish Proposition~\ref{P:3}.

\begin{lemma}  \label{L:aux1}
Let $B\in C^{\infty}(\R,\C)$ be of compact support. Then for each countable subset $F$ of $\C$ and each  $\epsilon >0$ there exists $a\in D(0,\epsilon)$ such that \[ \Range(B)\cap (F-a)=\emptyset.\]
\end{lemma}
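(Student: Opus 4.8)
The plan is to reformulate the conclusion as a statement about avoiding a set of planar Lebesgue measure zero. Observe that $\Range(B)\cap(F-a)=\emptyset$ fails precisely when there exist $x\in\R$ and $f\in F$ with $B(x)=f-a$, that is, when $a\in f-\Range(B)$ for some $f\in F$. Thus the conclusion is equivalent to finding
\[
a\in D(0,\epsilon)\setminus\bigcup_{f\in F}\bigl(f-\Range(B)\bigr).
\]

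The key step is to show that $\Range(B)$ has two-dimensional Lebesgue measure zero. Since $B$ has compact support, say contained in $[-M,M]$, we have $\Range(B)=B([-M,M])\cup\{0\}$, and $B$ restricted to $[-M,M]$ is Lipschitz because $B$ is $C^1$ with $B'$ bounded on the compact interval. The image of a bounded interval under a Lipschitz map into $\C\cong\R^2$ is null: covering $[-M,M]$ by $N$ subintervals of length $2M/N$ produces a cover of $B([-M,M])$ by $N$ discs of radius $O(1/N)$, whose total area is $O(1/N)\to 0$. (Alternatively this is immediate from Sard's theorem, every value being critical for dimension reasons.) Hence $\Range(B)$ is null.

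It then follows that each translate $f-\Range(B)$ is null, and since $F$ is countable the union $\bigcup_{f\in F}(f-\Range(B))$ is a countable union of null sets, hence null. As the disc $D(0,\epsilon)$ has positive Lebesgue measure, its portion lying outside this null union is nonempty (indeed of full measure in $D(0,\epsilon)$), so we may choose $a\in D(0,\epsilon)$ in it, which by the reformulation gives $\Range(B)\cap(F-a)=\emptyset$.

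The main obstacle is the measure-zero claim for $\Range(B)$; once that is in hand the remaining steps are routine measure theory. I expect that no smoothness beyond $C^1$ is actually needed, only that the range of a Lipschitz image of a compact interval is planar-null, so the compact support and $C^\infty$ hypotheses are used merely to guarantee a global Lipschitz bound.
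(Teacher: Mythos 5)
Your proposal is correct and takes essentially the same approach as the paper: both reduce the lemma to the fact that $\Range(B)$ is a planar null set (you prove it by an explicit covering of the Lipschitz image of a compact interval, the paper cites that a rectifiable curve has two-dimensional measure zero) and then apply countable subadditivity against the positive measure of $D(0,\epsilon)$. The only cosmetic difference is that you argue directly while the paper phrases the same argument as a contradiction.
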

\begin{proof}
Let $N\in \N$ such that $B$ is supported in $[-N, N]$. Notice that the restriction of $B$ to $[-N,N]$ is a closed rectifiable planar curve. Now, suppose that $\Range(B)\cap (F-a) \neq \emptyset$
for each $a\in D(0,\epsilon)$. Then
\begin{align*}
D(0,\epsilon)=&\bigcup_{y\in F}\{a\in D(0,\epsilon): y-a \in\Range(B)\}\\
=& \bigcup_{y\in F}[D(0,\epsilon)\cap (y-\Range(B))],
\end{align*}
and since $F$ is countable we must have for some $y\in F$ that \[m(D(0,\epsilon)\cap (y-\Range(B)))>0,\] where $m$ denotes the two dimensional Lebesgue measure. But since $B$ is rectifiable 
$m(y-\Range(B))=m(\Range(B))=0$, a contradiction.
\end{proof}

\begin{lemma} \label{L:aux2}
Let $P$ be a polynomial of degree $m\in\N$ and let $B\in C^{\infty}(\R,\C)$ be constant outside of a compact set such that 
\[\Range(B)\cap P(\{P'=0\})=\emptyset.\] Then there exists $g\in C^{\infty}(\R,\C)$ such that $P(g)=B$.
\end{lemma}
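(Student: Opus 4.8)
The plan is to realize $g$ as a continuous lift of $B$ through the branched covering induced by $P$, and then to upgrade continuity to smoothness and to verify the behaviour at infinity. The hypothesis $\Range(B)\cap P(\{P'=0\})=\emptyset$ is exactly what makes $B$ avoid the branch locus, so that a lift can be taken.

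First I would record the relevant structure of $P$ as a map $\C\to\C$. Since $\deg P=m\ge 1$, the derivative $P'$ has at most $m-1$ zeros, so the critical set $\{P'=0\}$ is finite and hence so is the set of critical values $C:=P(\{P'=0\})$. Moreover $P$ is proper, since $|P(z)|\to\infty$ as $|z|\to\infty$, and it is a local biholomorphism off $\{P'=0\}$. Consequently the restriction
\[
P:\C\setminus P^{-1}(C)\longrightarrow \C\setminus C
\]
is a proper local homeomorphism, hence an $m$-sheeted covering map (a proper local homeomorphism between manifolds is a covering map). The hypothesis says precisely that $B$ takes values in the base $\C\setminus C$ of this covering.

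Next, since $\R$ is simply connected (and locally path connected), the lifting criterion for covering maps produces a continuous $g:\R\to\C\setminus P^{-1}(C)$ with $P\circ g=B$; one only needs to fix $g$ at a single base point by choosing one of the $m$ preimages of $B(t_0)$, and the lift is then determined. Smoothness of $g$ is local: near any $t_0$ the point $g(t_0)$ is not critical, so $P$ has a holomorphic local inverse $\psi$ on a neighbourhood of $B(t_0)$ with $\psi(B(t_0))=g(t_0)$, and near $t_0$ the lift coincides with $\psi\circ B$, which is smooth. Hence $g\in C^\infty(\R,\C)$ and $P(g)=B$. For the behaviour outside a compact set, I would choose $N$ with $B$ constant on each of $(-\infty,-N]$ and $[N,\infty)$; over such a ray the lift $g$ maps a connected set continuously into the finite, hence discrete, fibre $P^{-1}(B(\pm\infty))$ and is therefore constant there, so $g$ is constant outside $[-N,N]$.

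The main obstacle I expect is packaging the covering-map statement cleanly, namely verifying that $P$ restricts to an honest $m$-sheeted covering over $\C\setminus C$ from properness together with local biholomorphy, and ensuring the resulting lift is globally defined on all of $\R$; once that is in place, the smoothness via local holomorphic inverses and the constancy on the two rays are routine.
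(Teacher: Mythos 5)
Your proof is correct, but it takes a genuinely different route from the paper's. You invoke covering space theory wholesale: since $C:=P(\{P'=0\})$ is finite and $P$ is proper and a local biholomorphism off its critical set, the restriction $P:\C\setminus P^{-1}(C)\to\C\setminus C$ is an $m$-sheeted covering map, and the simple connectivity (and local path connectivity) of $\R$ then produces a continuous lift $g$ of $B$ by the standard lifting criterion, with smoothness and constancy on the two rays as local afterthoughts. The paper instead proves the needed lifting property by hand: it applies Zorn's Lemma to the family of partial lifts $h\in C^\infty((-\infty,b),\C)$ with $P\circ h=B$, ordered by extension, and shows a maximal element must have $b=\infty$ --- if $b<\infty$, the hypothesis that $B(b)$ avoids the critical values gives $m$ distinct preimages $z_1,\dots,z_m$ and local holomorphic inverses $g_j$ of $P$ near them, one of which extends the lift past $b$, a contradiction (the paper also notes a compactness argument avoids Zorn's Lemma). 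Both arguments pivot on the same point, namely that $\Range(B)\cap P(\{P'=0\})=\emptyset$ makes $P$ a local biholomorphism along any lift; the difference is packaging. Your version is shorter and more conceptual once one grants the standard facts that a proper local homeomorphism onto a connected manifold is a finite covering and that maps from simply connected spaces lift uniquely given a base point --- two facts you rightly flag as the main things to verify, and for which you should also record surjectivity of the restricted map onto $\C\setminus C$ (immediate from the fundamental theorem of algebra). The paper's construction is self-contained, needing only the holomorphic local inverse function theorem. One further remark: your identification of $g$ with $\psi\circ B$ near each $t_0$ needs either uniqueness of lifts on connected sets or the direct observation that $P$ is injective on a neighbourhood of $g(t_0)$ containing $g(t)$ for $t$ near $t_0$; either suffices, and your closing verification that $g$ is constant on the rays (connected image in a finite, hence discrete, fibre) is correct though not actually demanded by the statement, which asks only for $g\in C^\infty(\R,\C)$ with $P(g)=B$.
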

\begin{proof}
Let $N\in \N$ such that $B$ is constant outside $[-N,N]$. Consider the family $\mathcal{M}$ of tuples $((-\infty, b), h)$ with $b\in (-\infty, \infty]$ and $h\in C^\infty ((-\infty, b), \C)$ satisfying that  $P\circ h= B$ on $(-\infty, b)$. Endow $\mathcal{M}$ with the partial order $\le$ given by
$((-\infty, b_1), h_1)\le ((-\infty, b_2), h_2)$ if and only if both $b_1\le b_2 \mbox{ and } h_1=h_2$ on $(-\infty, b_1)$.
Observe that $\mathcal{M}\neq \emptyset$ as by picking $c\in P^{-1}(B(-N))$ and letting $h:(-\infty,-N)\to\C$, $h(x)=c$, we have $((-\infty,-N),h)\in \mathcal{M}$. 
Also, any totally ordered subfamily $\{((-\infty,b_j), h_j)\}_{j\in J}$ of $\mathcal{M}$ has $((-\infty, b), h)\in \mathcal{M}$ as an upper bound, where $b=\sup_{j\in J}b_j$ and where  $h:(-\infty,b)\rightarrow \C$ is defined by $h(x)=h_j(x)$ for $x\in (-\infty,b_j)$.
 It follows by Zorn's Lemma that $\mathcal{M}$ contains some maximal element $((-\infty,b),g)$. We claim that $b=\infty$. 
Now, if $b<\infty$ then since $\Range(B)\cap P(\{P'=0\})=\emptyset$ there exist $m$ distinct points $z_1,\dots ,z_m$ such that $P(z_1)=\dots = P(z_m)= B(b)$. Furthermore, we may find a neighbourhood $W$ of $B(b)$ and pairwise disjoint open sets $U_1,\dots, U_m$  with $(z_1,\dots, z_m)\in U_1\times\dots\times U_m$ and biholomorphisms $g_j: W\rightarrow U_j$ such that $g_j\circ P (z)=z$ for $z\in U_j$ $(j=1,\dots,m)$. Now, pick an open interval  $(a, c)\subset B^{-1}(W)$ with   $b\in (a, c)$. Notice that $g((a, b))\subset U_j$ for some unique $j$. Define $h$ on $(-\infty,c)$ by $h(x)=g(x)$ if $x\in (-\infty,b)$ and $h(x)=g_j\circ B(x)$ if $x\in (a, c)$. Then $h$ is well defined and in $C^{\infty}((-\infty,c),\C)$ and $((-\infty,b),g)<((-\infty,c),h)$ contradicting the maximality of $((-\infty,b),g)$. So $b=\infty$ and  $P(g)=B$, concluding the proof.  
\end{proof}

We note that Lemma~\ref{L:aux2} also follows from an (albeit longer) compactness argument that does not require Zorn's Lemma.

\begin{proposition} \label{P:3}
Let $P$ be a non-constant polynomial with complex coeficients and which vanishes at zero. Then $\widehat{P}:C^\infty(\R,\C)\to C^\infty(\R,\C)$, $f\mapsto P(f)$, has dense range.
\end{proposition}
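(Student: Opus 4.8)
The plan is to exploit the fact that the topology of $C^\infty(\R,\C)$ is generated by the seminorms $P_k$, each of which only involves the interval $[-k,k]$. Thus, to prove that $\widehat P$ has dense range it suffices to fix $g\in C^\infty(\R,\C)$, an integer $k\in\N$, and $\epsilon>0$, and to produce some $f\in C^\infty(\R,\C)$ with $P_k\big(\widehat P(f)-g\big)<\epsilon$. The strategy is to first replace $g$ by a compactly supported function $B$ that agrees with $g$ on $[-k,k]$, then use a small constant shift (supplied by Lemma~\ref{L:aux1}) to push the range of $B$ off the finite critical-value set of $P$, and finally invoke Lemma~\ref{L:aux2} to solve $\widehat P(f)=B+a$ exactly.

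First I would construct $B\in C^\infty(\R,\C)$ with $B=g$ on $[-k,k]$ and with $\operatorname{supp}(B)\subset[-N,N]$ for some $N>k$, using a standard smooth cutoff; such $B$ is simultaneously compactly supported (so Lemma~\ref{L:aux1} applies) and constant outside a compact set (so Lemma~\ref{L:aux2} will apply to $B+a$). Since $P$ is non-constant, $P'$ is a non-zero polynomial, so $\{P'=0\}$ is finite and hence $F:=P(\{P'=0\})$ is a finite, in particular countable, subset of $\C$.

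Next I would apply Lemma~\ref{L:aux1} to $B$ with this $F$ and the given $\epsilon$, obtaining $a\in D(0,\epsilon)$ with $\Range(B)\cap(F-a)=\emptyset$, equivalently $\Range(B+a)\cap F=\emptyset$, i.e.\ $\Range(B+a)\cap P(\{P'=0\})=\emptyset$. The function $B+a$ is constant (equal to $a$) outside $[-N,N]$, so Lemma~\ref{L:aux2} yields $f\in C^\infty(\R,\C)$ with $\widehat P(f)=P(f)=B+a$. On $[-k,k]$ we then have $\widehat P(f)-g=(B+a)-g=a$, a constant function, whence $P_k\big(\widehat P(f)-g\big)=|a|<\epsilon$, as desired.

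The genuinely substantive content is packaged in the two preceding lemmas, so the remaining obstacle is conceptual rather than computational: one must ensure that the range of the target can be nudged off the critical values of $P$ by an arbitrarily small perturbation without destroying the approximation on $[-k,k]$. This is exactly why the shift is taken to be a single constant $a$: it moves the whole range rigidly, so the measure-zero argument of Lemma~\ref{L:aux1} forces a good choice, while only changing $\widehat P(f)-g$ by that same constant on $[-k,k]$, keeping the seminorm equal to $|a|$. The only point requiring care is that $B$ be chosen compactly supported (rather than merely constant outside a compact set) so that Lemma~\ref{L:aux1} applies verbatim; taking the tail constant to be $0$ achieves this.
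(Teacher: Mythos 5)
Your proof is correct and follows essentially the same route as the paper's: approximate by a compactly supported function, use Lemma~\ref{L:aux1} to shift its range off the critical values $P(\{P'=0\})$ by a small constant $a$, and solve $P(f)=B+a$ exactly via Lemma~\ref{L:aux2}. The only difference is cosmetic: you phrase density via the seminorms $P_k$, while the paper phrases it via an arbitrary open set $U$ (implicitly using that a small constant shift keeps $B+a\in U$), which are the same argument.
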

\begin{proof}
Given $U\subset C^{\infty}(\R,\C)$ open and non-empty, pick $B\in U$ with compact support. By Lemma~\ref{L:aux1} there exists $a\in\C$ such that  $B_1:=B+a \in U$ and  $\Range(B_1)\cap P(\{P'=0\})=\emptyset$. By Lemma~\ref{L:aux2} there exists some $g\in C^{\infty}(\R,\C)$ such that $P(g)=B_1$.
\end{proof}

\begin{corollary}  \label{C:HcAsmoothtranslationsC}
For each $0\ne a\in \R$ the translation operator $\tau_a$ supports a hypercyclic algebra on $C^\infty(\R, \C)$.
\end{corollary}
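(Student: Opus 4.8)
The plan is to combine Theorem~\ref{P:duo} with Proposition~\ref{P:3} in the most direct way possible. The statement to prove, Corollary~\ref{C:HcAsmoothtranslationsC}, asserts that for each $0\ne a\in\R$ the translation operator $\tau_a$ supports a hypercyclic algebra on $C^\infty(\R,\C)$. First I would observe that $\tau_a$ is a multiplicative operator on the separable $F$-algebra $X=C^\infty(\R,\C)$ over the complex field $\K=\C$: indeed pointwise multiplication satisfies $\tau_a(fg)(x)=f(x+a)g(x+a)=(\tau_a f)(x)(\tau_a g)(x)$, so $\tau_a(fg)=\tau_a(f)\,\tau_a(g)$. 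This lets me invoke the equivalence in Theorem~\ref{P:duo}.

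Next I would verify the two remaining hypotheses needed to apply that theorem. The first is hypercyclicity of $\tau_a$ on $C^\infty(\R,\C)$. This is already available from the proof of Corollary~\ref{C:T_aR}, where the $\R$-linear homeomorphism $J:C^\infty(\R,\C)\to C^\infty(\R,\R)\times C^\infty(\R,\R)$, $J(f)=(\Re f,\Im f)$, conjugates $\tau_a$ to $T_a\oplus T_a$ and thereby shows $T_a$ (hence $\tau_a$) is weakly mixing, in particular hypercyclic. The second hypothesis is condition $(b)$ of Theorem~\ref{P:duo}: for every non-constant polynomial $P\in\C[t]$ with $P(0)=0$, the map $\widehat P:f\mapsto P(f)$ must have dense range on $C^\infty(\R,\C)$. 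But this is exactly the content of Proposition~\ref{P:3}.

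With these two facts in place the conclusion is immediate: by Theorem~\ref{P:duo} the implication $(b)\Rightarrow(a)$ (equivalently $(b)\Rightarrow(c)$) yields that $\tau_a$ supports a hypercyclic algebra, and in fact every hypercyclic vector generates one. I would write this as a three-sentence proof: note multiplicativity and hypercyclicity of $\tau_a$, cite Proposition~\ref{P:3} for condition $(b)$, and conclude via Theorem~\ref{P:duo}.

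I do not expect any genuine obstacle here, since all the substantive work has been front-loaded into Theorem~\ref{P:duo} and Proposition~\ref{P:3}. The only point requiring a moment's care is confirming that $C^\infty(\R,\C)$ is a separable $F$-algebra so that Theorem~\ref{P:duo} applies verbatim: the seminorms $P_k$ defined in the introduction make it a Fr\'echet space, pointwise multiplication is jointly continuous with respect to these seminorms (a routine estimate via the Leibniz rule on each compact interval $[-k,k]$), and separability follows from density of polynomials with rational coefficients or of a suitable countable family. The mild subtlety worth a remark is scalar field bookkeeping: Proposition~\ref{P:3} is stated for complex polynomials on $C^\infty(\R,\C)$, matching $\K=\C$ in Theorem~\ref{P:duo}, so everything aligns; this is precisely the feature that fails in the real case treated in Corollary~\ref{C:T_aR}, where $\{f^2:f\in C^\infty(\R,\R)\}$ is not dense.
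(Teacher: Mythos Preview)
Your proposal is correct and follows the same route as the paper: check that $\tau_a$ is multiplicative and hypercyclic on $C^\infty(\R,\C)$, invoke Proposition~\ref{P:3} for condition~$(b)$, and conclude via Theorem~\ref{P:duo}. The only difference is in justifying hypercyclicity: the paper derives it from Birkhoff's theorem and the Comparison Principle via the restriction map $\mathcal{R}:H(\C)\to C^\infty(\R,\C)$ (observing $\tau_a\mathcal{R}=\mathcal{R}\tau_a$), whereas your citation of Corollary~\ref{C:T_aR} is slightly circular---that proof deduces weak mixing of $T_a$ \emph{from} weak mixing of $\tau_a$ via the conjugacy $J$, not the other way around---so you should instead point to Birkhoff plus $\mathcal{R}$ as the paper does.
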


\begin{proof}
The hypercyclicity of $\tau_a$ on $C^\infty(\R, \C)$ follows from Birkhoff's theorem and the so-called Comparison Principle; notice that $\tau_a \mathcal{R} = \mathcal{R} \tau_a$ where the translation on the right hand side is acting on $H(\C)$. The conclusion now follows by Proposition~\ref{P:3} and Theorem~\ref{P:duo}.
\end{proof}

\subsection{Composition operators on $H(\Omega)$}
Recall that given a domain $\Omega$ in the complex plane, each $\omega \in H(\Omega )$ and $\varphi: \Omega \to \Omega$ holomorphic induce a weighted composition operator
\[
C_{\omega, \varphi}:H(\Omega)\to H(\Omega), f\mapsto \omega (f\circ \varphi).
\]
When $C_{\omega, \varphi}$ is supercyclic, the weight symbol $\omega$ must be zero-free and the compositional symbol $\varphi$ must be univalent and without fixed points. Moreover, these conditions are sufficient for the hypercyclicity of $C_{\omega, \varphi}$ when $\Omega$ is simply connected \cite[Proposition~2.1 and Theorem~3.1]{Bes_CAOT}.  We conclude the paper by noting the following extension of Theorem~\ref{T:-1}.

\begin{theorem} \label{T:trio}
Let $\Omega\subset \C$ be a domain. Then no 
weighted composition operator $C_{\omega, \varphi}:H(\Omega)\to H(\Omega)$ supports a supercyclic algebra. 
\end{theorem}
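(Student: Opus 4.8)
The plan is to argue by contradiction, exploiting that a subalgebra is closed under squaring together with the elementary fact that a square can only have zeros of even order. Suppose $C_{\omega,\varphi}$ supported a supercyclic algebra $A$. Then every nonzero $f\in A$ is a supercyclic vector, so $C_{\omega,\varphi}$ itself is supercyclic; by the characterization recalled above this forces the weight $\omega$ to be zero-free on $\Omega$. I would first record the explicit form of the iterates,
\[
C_{\omega,\varphi}^n(f)=\omega_n\cdot (f\circ \varphi^{[n]}),\qquad \omega_n:=\prod_{k=0}^{n-1}\omega\circ\varphi^{[k]},
\]
where $\varphi^{[n]}$ denotes the $n$-th compositional iterate; since $\omega$ is zero-free and each $\varphi^{[k]}$ maps $\Omega$ into $\Omega$, the factor $\omega_n$ is zero-free.

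Fixing any $0\ne f\in A$, the square $f^2$ again lies in $A$ and is therefore supercyclic. The heart of the matter is that the parity of the zero multiplicities is a dynamical invariant: every zero of $f^2$ has even order, hence so does every zero of $f^2\circ\varphi^{[n]}$ — the order at a point $z_0$ mapping to a zero $w_0$ of $f^2$ equals the (even) order of $f^2$ at $w_0$ times the order of vanishing of $\varphi^{[n]}-w_0$ at $z_0$ — and multiplication by the zero-free $\omega_n$, or by any nonzero scalar, leaves zeros untouched. Thus every nonzero element of $\C\cdot\mbox{Orb}(f^2,C_{\omega,\varphi})$ has all of its zeros of even order.

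To conclude I would exhibit a concrete target incompatible with this invariant: fix $p\in\Omega$ and take $g(z)=z-p\in H(\Omega)$, whose only zero in $\Omega$ is simple. If $f^2$ were supercyclic there would be scalars $\lambda_k$ and integers $n_k$ with $\lambda_k C_{\omega,\varphi}^{n_k}(f^2)\to g$ uniformly on compact subsets of $\Omega$; since $g\not\equiv 0$ we may assume $\lambda_k\ne 0$, so each approximant has only even-order zeros. Applying Hurwitz's theorem on a small disc about $p$, the number of zeros (with multiplicity) of the approximants there must eventually equal the multiplicity $1$ of the zero of $g$; but a sum of even multiplicities is even, contradicting the value $1$. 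Hence $f^2$ is not supercyclic and no such $A$ can exist.

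The step requiring the most care — and the one I regard as the crux — is the verification that the even-order property survives the whole orbit and is not destroyed by the weight. It is exactly here that the zero-freeness of $\omega$ (granted by supercyclicity) is indispensable, and one must also check that precomposition with $\varphi^{[n]}$ cannot turn an even-order zero into an odd-order one; writing the local order as a product makes this transparent. Once the parity invariant is in place the Hurwitz step is routine. It is worth stressing that this argument never uses multiplicativity of $C_{\omega,\varphi}$, which fails in general for a nonconstant weight, so Theorem~\ref{P:duo} does not apply; the rigidity is instead supplied directly by squaring inside the algebra.
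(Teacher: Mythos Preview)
Your proof is correct and follows essentially the same strategy as the paper's: both show that $f^2$ cannot be supercyclic for $C_{\omega,\varphi}$ by noting that every nonzero element of the projective orbit has only even-order zeros (since $\omega$ is zero-free and each iterate factors as a zero-free function times $(f\circ\varphi^{[n]})^2$), and then invoking Hurwitz's theorem against a target with an odd-order zero. The only cosmetic difference is your choice of target $g(z)=z-p$ with a simple zero versus the paper's $(z-a)^3$ with a triple zero; either odd multiplicity serves.
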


\begin{proof}
Given any $f\in H(\Omega)$ and $a\in\Omega$,  the polynomial $g(z)=(z-a)^3$ is not in the closure of 
\[
\C \mbox{Orb}(C_{\omega, \varphi}, f^2) = \mbox{span}(\{ f^2\}) \cup \left\{  \lambda \prod_{j=0}^{n-1} C^j_{\varphi}(\omega) \ C_{\varphi}^n(f^2) : \ n\in\N, \lambda\in\C \right\}.
\]

Indeed, if $(n_k)$ is a strictly increasing sequence of positive integers and $(\lambda_k)$  is a scalar sequence satisfying 
\[
\lambda_k  \left( \prod_{j=0}^{n_k-1} C_{\varphi}^j(\omega ) \right) \, C_{\varphi}^{n_k} (f^2)=  \lambda_k C_{\omega, \varphi}^{n_k}(f^2) \underset{k\to\infty}{\to} g
\]
then  by Hurwitz theorem \cite[page 231]{gamelin} there exists a disc $D(a, \delta)\subset \Omega$ centered at $a$ so that for each large $k$
\[
\lambda_k \,\left( \prod_{j=0}^{n_k-1} C_{\varphi}^j (\omega) \right) \, C_{\varphi}^{n_k} (f^2) =  \lambda_k \, \left( \prod_{j=0}^{n_k-1} C_{\varphi}^j(\omega)\right) \,   (C_{\varphi}^{n_k}(f) )^2
\]
has exactly three zeroes (counted with multiplicity) on $D(a, \delta)$ which is impossible since $\omega$ is zero-free. 
\end{proof}

When $\Omega \subset \C$ is simply connected and $C_\varphi$ is a hypercyclic composition operator on $H(\Omega)$, then any operator in the algebra of operators generated by $C_\varphi$ is also hypercyclic \cite[Theorem~1]{bes_racsam}. Hence it is natural to ask:
\begin{question}
Let $C_\varphi$ be a hypercyclic composition operator on $H(\Omega)$, where $\Omega$ is simply connected, and let $P$ be a non-constant polynomial with $P(0)=0$. Can $P(C_\varphi)$ support a hypercyclic algebra?
\end{question}
The answer must be affirmative if Question~\ref{Pr:1}(ii) has an affirmative answer.  
Finally, notice that in contrast with Theorem~\ref{T:trio},  by Corollary~\ref{C:HcAsmoothtranslationsC} it is possible for a composition operator to support a hypercyclic algebra on 
$C^\infty(\R, \C)$.
The hypercyclic weighted composition operators on $C^\infty(\Omega, \C)$, where $\Omega\subset \R^d$ is open,  have been characterized in \cite{Przestacki}, see also \cite{bonet}. We conclude the paper with the following question.

\begin{question}
Let $\Omega\subset \R^d$ be open and nonempty. Which weighted composition operators on $C^\infty(\Omega, \C)$ support a hypercyclic algebra?
\end{question}

\end{document}